\newtheorem*{mypropo*}{Proposition}
\newtheorem*{mycoro*}{Corollary}
\newcommand{\blind}{0}
\begin{document}

\def\spacingset#1{\renewcommand{\baselinestretch}%
{#1}\small\normalsize} \spacingset{1}

%%%%%%%%%%%%%%%Short name
%%Sequential normal score

%%%%%%%%%%%%%%%%%%%%%%%%%%%%%%%%%%%%%%%%%%%%%%%%%%%%%%%%%%%%%%%%%%%%%%%%%%%%%%

\if0\blind
{
  \title{\bf The Sequential Normal Scores Transformation}
  \author{Dr. William J. Conover%\thanks{
%    The authors gratefully acknowledge}\hspace{.2cm}
    \\
    Department of Mathematics and Statistics, Texas Tech University\\
    \\
    Dr. V\'ictor G. Tercero-G\'omez\\
    School of Engineering and Sciences, Tecnologico de Monterrey\\
    \\
    Dr. Alvaro E. Cordero-Franco \\
    Facultad de Ciencias F\'isico Matem\'aticas, Universidad Autonoma de Nuevo Leon\\}
  \maketitle
} \fi

\if1\blind
{
  \bigskip
  \bigskip
  \bigskip
  \begin{center}
    {\LARGE\bf The Sequential Normal Scores Transformation}
\end{center}
  \medskip
} \fi

\bigskip
\begin{abstract}
%%%%%%%%%%%%%%%%%%%%%%%%%%%%%%%%%%%%%%%%%%%%%%%%%%%
%RATIONALE
%%%%%%%%%%%%%%%%%%%%%%%%%%%%%%%%%%%%%%%%%%%%%%%%%%%
The sequential analysis of series often requires nonparametric procedures, where the most powerful ones frequently use rank transformations. Re-ranking the data sequence after each new observation can become too intensive computationally. This led to the idea of sequential ranks, where only the most recent observation is ranked. 
%%%%%%%%%%%%%%%%%%%%%%%%%%%%%%%%%%%%%%%%%%%%%%%%%%%
%PROBLEM
%%%%%%%%%%%%%%%%%%%%%%%%%%%%%%%%%%%%%%%%%%%%%%%%%%%
However, difficulties finding, or approximating, the null distribution of the statistics may have contributed to the lack of popularity of these methods.
%%%%%%%%%%%%%%%%%%%%%%%%%%%%%%%%%%%%%%%%%%%%%%%%%%%
%PROPOSAL
%%%%%%%%%%%%%%%%%%%%%%%%%%%%%%%%%%%%%%%%%%%%%%%%%%%
In this paper, we propose transforming the sequential ranks into sequential normal scores which are independent, and asymptotically standard normal random variables. Thus original methods based on the normality assumption may be used.

A novel approach permits the inclusion of a priori information in the form of quantiles. It is developed as a strategy to increase the sensitivity of the scoring statistic.
%%%%%%%%%%%%%%%%%%%%%%%%%%%%%%%%%%%%%%%%%%%%%%%%%%%
%RESULTS
%%%%%%%%%%%%%%%%%%%%%%%%%%%%%%%%%%%%%%%%%%%%%%%%%%%
The result is a powerful convenient method to analyze non-normal data sequences. Also, four variations of sequential normal scores are presented using examples from the literature.
%%%%%%%%%%%%%%%%%%%%%%%%%%%%%%%%%%%%%%%%%%%%%%%%%%%
%CONCLUSIONS
%%%%%%%%%%%%%%%%%%%%%%%%%%%%%%%%%%%%%%%%%%%%%%%%%%%
Researchers and practitioners might find this approach useful to develop nonparametric procedures to address new problems extending the use of parametric procedures when distributional assumptions are not met. These methods are especially useful with large data streams where efficient computational methods are required.
\end{abstract}

\noindent%
{Keywords:}  Big data, nonparametric, sequential hypothesis testing, sequential ranks, statistical process control.

\noindent%
{AMS Subject Classifications:} 62L10, 62Gxx.
\vfill
%\hfill {\tiny technometrics tex template (do not remove)}
%\begin{classcode}62L10; 62Gxx \end{classcode}

\newpage
\spacingset{2} % DON'T change the spacing!

%%%%%%%%%%%%%%%%%%%%%%%%%%%%%%%%%%%%%%%%%%%%%%%%%%%
%SECTION: INTRODUCTION
%%%%%%%%%%%%%%%%%%%%%%%%%%%%%%%%%%%%%%%%%%%%%%%%%%%
\section{Introduction}
\label{sec:intro}
Many applications of statistics involve sequences of observations, where
decisions are made at
several time points along the sequence.
According to \cite{wald1945sequential}, a sequential test of hypothesis is
the name given to any procedure, where at the $n$-th trial of an experiment
a decision is made to accept the null hypothesis, accept the alternative
hypothesis, or continue the experiment by making another observation.
Hence, the analysis is sequential, and the time it takes to reach a
statistical conclusion is random in nature.
\cite{dodge1929method}
developed sequential tests with applications in quality inspection.
\cite{shewhart1931} suggested the idea of online monitoring
and continuous testing for statistical control. \cite{wald1945sequential}
conceived a general testing procedure named the sequential probability ratio test.
\cite{box1957evolutionary} proposed an evolutionary operation, a sequential
method to carry on experiments. From these initial propositions, many
statistical approaches have been created to solve a variety of problems.
However, most of these statistical methods,
when employed in practice, assume knowledge of the probability distribution
of the observations, or that the observations can be averaged across groups
so the sample mean is approximately normally distributed by the Central Limit
Theorem (refer to Shewhart $\bar{X}$-charts as an example).

When the data are not normally distributed, the normal-based methods are often
robust to the non-normality by the Central Limit Theorem, but robustness
protects only the validity of the probability of a Type I error, and such
methods can suffer from a loss of power. Nonparametric methods are extensively
used in analysis of fixed-sample-size
applications, with the well-known advantage of often increasing the power over
parametric methods, especially in the presence of outliers or skewed data. The most popular and most powerful nonparametric methods
usually involve replacing the observations by ranks, or scores based on ranks.
Nevertheless, a major problem arises when trying to adapt nonparametric
rank methods
to sequential data due to the intensive computation required if all the data
are reranked each time a new nonparametric test is performed. Some solutions
involve grouping the observations so the sequence is not observed
continuously, but only at selected intervals \citep{ross_2011}. However, this
lessens the sensitivity of the analysis by effectively reducing the number of
times the sequence is tested.

From \cite{parent1965sequential}, a reasonable solution to the computation
problem involves using sequential ranks so only the most recent observation
is ranked relative to the previous, unchanged, ranks. \citeauthor{parent1965sequential}
notes that there is a one-to-one correspondence between the ordinary ranks and the
sequential ranks, so no information is lost by using sequential ranks instead of
ordinary ranks. In fact there is an advantage gained, in that sequential ranks are
independent of each other while ordinary ranks have a built-in dependence.

\citeauthor{parent1965sequential} adapted sequential ranks to allow
continuous testing of a
sequence of observations by adapting the  sequential probability
ratio test  which has optimum properties as shown by \cite{wald1948optimum}. The adaptation is awkward and unwieldy, which may explain why it has not
become popular in usage. \cite{reynolds1975sequential} solved the problem of
sequential ranks being “bottom heavy” in
that small ranks are more likely to occur than large ranks, which occur only
as the sequence accumulates more observations. He “standardized” the
sequential ranks by dividing them at each stage by (i+1) where i represents
the number of observations up to that point. This effectively “spread” the
ranks over the unit interval (0,1) at each stage in the analysis. However, the
null distribution of the rank statistics was difficult to find, and so he
showed it converged to a Markov process, and used the theory of Markov
processes to obtain critical values for statistical tests. This too is
awkward and unwieldy and has not been widely accepted in practice.

We are suggesting (for the first time, perhaps) to take advantage of the
relationship between ranks and the estimators of a cumulative probability to
replace the ranks by normal
scores in the sense of \cite{van1952order}, where each sequential
rank is substituted with the corresponding quantile of the standard normal distribution.
Thus, the ranks are effectively replaced by numbers that appear to have come from a
standard normal distribution. Because these sequential normal scores are
independent of each other, and
behave like normal random variables, the usual sequential methods based on
normal observations may be applied as approximate methods,
and special analytical methods are no
longer needed. Changes in location or variation occurring in the original sequence
will be reflected by the changes in shift or spread of the sequential normal
scores.

This paper examines the theoretical validity of sequential normal scores
and evaluates the versatility through the analysis of
known real life examples found in the literature on applications of statistical
methods to the analysis of sequences of observations.
Section \ref{sec:SNS} is meant for a wide audience of practitioners and academics
alike. It describes and discusses four variations of the proposed method with their
corresponding numerical examples that illustrate the flexibility in practical
situations. Descriptions are straight forward, only a basic level of mathematics and
statistical methods is required, and the discussion is centered around the application.

Section \ref{sec:theory} is intended for a specialized audience.
It demands a deeper knowledge of probability and statistical
theory by offering a discussion on the mathematical development of the method and some
of its properties to establish confidence among users that the methods have a sound theoretical
justification and foster further applications.
Readers who are interested only in the application might want to skip this section.
Finally, Section \ref{sec:conclusions} provides a summary of the approach, its
practical implications, and suggestions for future research.

\section{Sequential Normal Scores}
\label{sec:SNS}
Common industrial applications of sequential tests include the statistical
monitoring of individual or batched observations where some knowledge might,
or might not,
be available about the population quantiles. If available, known or estimated quantiles
can be incorporated to increase the sensitivity of a statistic. Otherwise,
a self-starting approach is required to build knowledge as it becomes available.
Both situations are addressed in this section through four related models. The
first two models address the self-starting situation, while the last two aim
to incorporate existing knowledge about a quantile. An example is given immediately
after a model is presented to illustrate and discuss applicability.

\subsection{Individual observations with unknown quantiles}
\label{subsec:model1}

Let $X_1,X_2,...,$ be a sequence of independent identically distributed random
variables with a continuous distribution function $F(x)$. The sequential rank
$R_i$ of $X_i$, where $i$ stands for the observed order within the sequence, is
defined by \cite{parent1965sequential} as the rank of $X_i$ relative to the previous
random variables in the sequence up to and including $X_i$. The sequential ranks of all the observations preceding $X_i$ remain unchanged. This can save considerable
computing time when using rank-based nonparametric methods.

For reasons explained in Section \ref{sec:theory} we will use 
\begin{align}
\label{eq:Pn}
P_i=\frac{R_i-0.5}{i}
\end{align}
to estimate $F(X_i)$. \textit{Sequential normal scores} are obtained from $P_i$ using
$Z_i = \Phi^{-1}(P_i)$ where $\Phi^{-1}$ stands for the inverse cumulative standard
normal distribution function. As will be proven in Section \ref{sec:theory}, the
sequence $\left\{ Z_i:i=1,\ldots \right\}$  consists of mutually independent
asymptotically (as $i$ gets large) standard normal random variables.

As a short illustration of how sequential normal scores are obtained, consider the
following observations on the first 10 random variables $X_i$ $(i = 1, 2,\ldots, 10)$
in a sequence from an arbitrary continuous distribution function
$F(x)$:4.6, 5.1, 3.9, 4.4, 4.8, 6.6, 5.3, 8.3, 4.7, and 5.0.
The sequential ranks $R_i$ are: 1, 2, 1, 2, 4, 6, 6, 8, 4, and 6.
Note that $F(x)$ is assumed to be continuous so the probability of ties is zero,
but if ties occur due to round off, average ranks can be used in practice.
Hence, the estimates of $F(X_i)$, or $P_i$, are:
0.5000, 0.7500, 0.1667, 0.3750, 0.7000, 0.9167, 0.7857, 0.9375, 0.3889, and 0.5500.
The sequential normal scores $Z_i$ are then (rounded to four decimals):
0.0000, 0.6745, -0.9674, -0.3186, 0.5244, 1.3830, 0.7916, 1.5341, -0.2822, 0.1257.
We will show in Section 3 that if the $X_i$’s in the sequence are independent,
the sequential ranks are independent \citep{parent1965sequential} and therefore the
sequential normal scores are independent and approach in distribution the standard
normal distribution as $i$ gets large.

\subsubsection{Bearing example}
When bearings slide over a lubricated surface, vibrations are steady and
predictable, in control. However, after a large period of utilization, microscopic
fractures initiate a vicious cycle that increases vibrations exponentially until
bearings fatigue and break. Quick detection of a sustained change in bearing 
vibrations
creates an advantage that might allow preventive maintenance to avoid costly
machine repairs. To illustrate the application of sequential normal scores on real measurements, the
procedure was applied over a data set from the IEEE PHM 2012 Data Challenge
organized by the IEEE Reliability Society and FEMTO-ST Institute.
During the challenge to estimate the remaining useful life of bearings,
several experiments to accelerate degradation were carried out on a laboratory
experimental platform called PRONOSTIA. Datasets and further information about
the data challenge are available in \cite{nectoux2012pronostia}. Vibrations
were processed using a fast Fourier transform (FFT) and the results were summarized
as RMS (root mean square) measurements, a convention used to
to provide an indication of the amount of energy spent on vibration. FFT and RMS
measurements were calculated by \cite{barraza2015adaptive}, where time series
models are used to characterize PRONOSTIA's bearing data. The specific data set
used to create this example was taken from \cite{nectoux2012pronostia},
scenario 1, bearing 4. Here, the bearing was run to failure; hence, it is known,
by design, that measurements describe a movement from an in-control state to an
out-of-control state.

Using a least squares approach over the first 1000 RMS observations,
an MA(1) was fitted over the first differences,
and independent errors were used for process monitoring. A CUSUM chart
was constructed after transforming the errors into sequential normal scores. Even if data
is not normal, practitioners can rely on the normal approximation of the scores and proceed
with a CUSUM chart set up for normally distributed observations. The monitored errors
can be seen in Figure \ref{fig:bearingsub1}, and the corresponding CUSUM chart
applied over the sequential normal scores is shown in Figure \ref{fig:bearingsub2}.
A one-sided CUSUM for positive shifts with an allowance $k = 0.25$
and a decision interval $H = 7.267$ were used for monitoring to achieve an
approximate in control $ARL$ of 500. A signal is triggered at observation
1082, just before vibrations start to exhibit an evident ``violent'' behavior.

%%%%%%%%%%%%%%%%%%%%%%%%%%%%%%%%%%%%%%%%%%%%%%%%%%%%%%%%%%%%%%%%%%%%%%%
%FIGURE 1 STARTS HERE
%%%%%%%%%%%%%%%%%%%%%%%%%%%%%%%%%%%%%%%%%%%%%%%%%%%%%%%%%%%%%%%%%%%%%%%
\begin{figure}
\centering
\begin{subfigure}{.5\textwidth}
  \centering
  \includegraphics[width=1\linewidth]{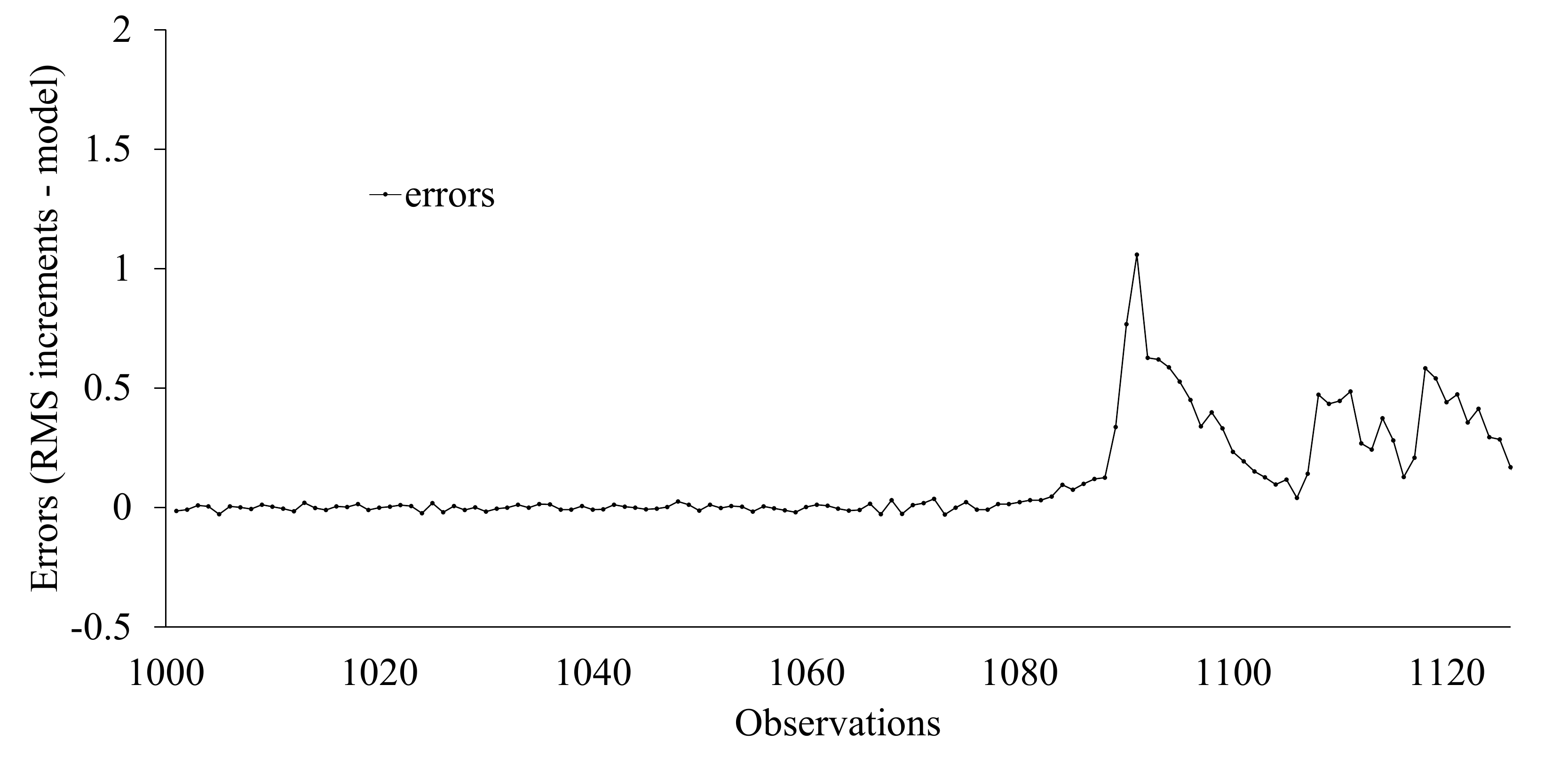}
  \caption{Monitored errors from bearing vibrations.}
  \label{fig:bearingsub1}
\end{subfigure}%
%Delete space betwen subfigures if you want side by side
\begin{subfigure}{.5\textwidth}
  \centering
  \includegraphics[width=1\linewidth]{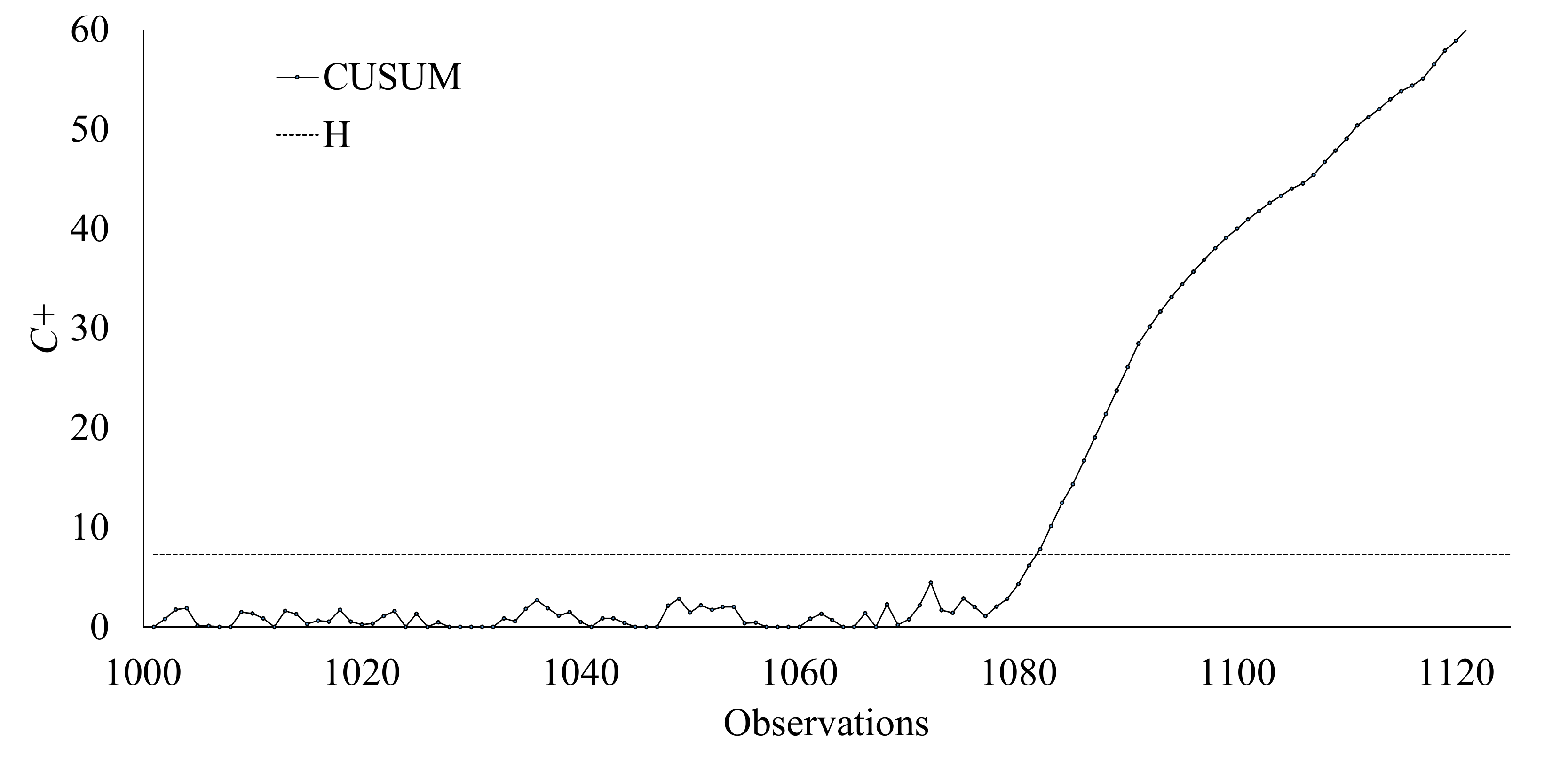}
  \caption{One-sided CUSUM using sequential normal scores.}
  \label{fig:bearingsub2}
\end{subfigure}
\caption{A CUSUM chart was used on sequential normal scores obtained from observed errors
after fitting an ARIMA(0,1,1) model over a first set of 1000 in-control RMS vibration
measurements from bearings at the IEEE PHM 2012 Data Challenge.
A one-sided CUSUM with a reference value $k = 0.25$ and a decision interval $H = 7.267$
was used.}
\label{fig:bearing}
\end{figure}
%%%%%%%%%%%%%%%%%%%%%%%%%%%%%%%%%%%%%%%%%%%%%%%%%%%%%%%%%%%%%%%%%%%%%%%
%FIGURE 1 ENDS HERE
%%%%%%%%%%%%%%%%%%%%%%%%%%%%%%%%%%%%%%%%%%%%%%%%%%%%%%%%%%%%%%%%%%%%%%%

\subsection{Batched observations with unknown quantiles}
\label{subsec:model2}
Let $\{ X_{ij}:i=1,\hdots,; j=1,\hdots,m \}$ be a sequence of independent identically
distributed random variables with continuous distribution function $F(x)$. The second
subscript refers to the fact that these random variables are grouped into batches
(samples) of size $m$. For the first batch ($i = 1$) the $m$ random variables are 
ranked relative to the other random variables in that batch, and the ranks are
denoted by $R_{1,j}$ for $j=1,\ldots, m$. For all subsequent batches the sequential
rank $R_{i,j}$ of $X_{i,j}$, where $i$ stands for the batch number, and $i\geq 2$,
is given by, 
\begin{align}
\label{eq:Rnj}
R_{i,j}(X_{i,j})= \sum_{k=1}^{i-1} \sum_{l=1}^m I(X_{k,l} \leq X_{i,j}) + 1
\end{align}
for $i\geq 2$ and $j=1,2,\ldots,m$, where $I(X_{kl} \leq X_{i,j})$ is an indicator function.

This sequential rank is computed for each $j=1,\ldots, m$ in batch $i$.
Note that several of these sequential ranks within a batch may be equal to one another,
but do not change as more batches are observed. Again, this can save considerable
computing time when using rank-based nonparametric methods.

For reasons explained in Section \ref{sec:theory} we will use
\begin{align}
\label{eq:P1j}
P_{1,j}=\frac{\left(R_{1,j}-0.5\right)}{m},
\end{align}
and
\begin{align}
\label{eq:Pij}
P_{i,j}=\frac{\left(R_{i,j}-0.5\right)}{m(i-1) +1}
\end{align}  
for $i>1$, to estimate $F(X_{i,j})$. \textit{Sequential normal scores} are obtained from
$P_{i,j}$ using $Z_{i,j}=\Phi^{-1} \left( P_{i,j} \right)$ where $\Phi^{-1}$
stands for the inverse cumulative standard normal distribution function. As will be
proven in Section \ref{sec:theory}, the sequence
$\left\{ Z_{i,j}:i=1,2, \ldots; j=1,2,\ldots,m \right\}$ consists of mutually
independent asymptotically (as $i$ gets large) standard normal random variables.
By comparing each random variable with only the previous batches plus itself,
and not with other random variable in the same batch, the independence of the
sequential ranks is kept, and the sensitivity to detect a change is improved
by not using comparisons within a batch potentially from the alternative distribution.

\subsubsection{Service time example}

We use an example from \cite{yang2012new} and \cite{yang2015new}
in which the efficiency of the service system in a bank is analyzed.
The service process times for 10 counters were measured (in minutes) every 2 days for 30 days.
Fifteen in-control samples of size $m=10$ were obtained from a bank branch.
\cite{yang2015new} show that data appear to be right-skewed (see Figure \ref{fig:timesub1}),
hence a procedure
based on the normal distribution is not recommended. Later, 10 new samples from
a new automatic service system were collected. From the analysis carried out by
\cite{yang2015new} there is indeed a change in the new samples. That is, the 10 new
samples that belong to a out-of-control state showed a reduction in the variance.
By performing a Phase I analysis, \cite{yang2015new} estimated the variance of the
process from the first 15 samples, and carried a test on the remaining 10 samples.
The test was rejected after the second sample.

Without using a Phase I analysis to estimate an in-control value of the variance to use as reference,
sequential normal scores can be adapted to test for a
variance change. Scores $Z_{i,j}$ can be used in an optimal CUSUM for downward process
variance by monitoring the statistic
\begin{align*}
C_i^{-}=min(0,C_{i-1}^{-}+s_i^2 - k)
\end{align*}
where $C_0=0$, $s_i^2$ corresponds to the sample variance (from the scores) in batch i, and
	\begin{align*}
k=\frac{2\log(\frac{\sigma_0}{\sigma_1})\sigma_0^2 \sigma_1^2}{\sigma_1^2-\sigma_0^2},
\end{align*}
where $\sigma_0^2$ and $\sigma_1^2$ corresponds to the out-of-control and the in-control variance,
respectively; which gives a signal when $C_i^-<H$. From \cite{hawkins2012cumulative} we obtain
a value of $H=-1.645$ and an allowance $k = 0.793$ to achieve an $ARL_0$ of $370.0$,
which is optimum for a variance reduction from 1 to 0.8. The value, 0.8, for the out-of-control
variance was selected arbitrarily to illustrate the approach.
Original observations and the results from the CUSUM are plotted in
Figure \ref{fig:time}.
It can be seen that a variance reduction was
signaled at observation 22, that is, 7 observations after the actual change.
Note that the natural CUSUM change-point statistic, the greatest observation with
cumulative value $C^-$
equal to zero, is 16, the actual first out-of-control sample.
In this example, the first batch was transformed using
their corresponding relative ranks, and the subsequent batches were ranked using sequential ranks.
Then the inverse of a standard normal cumulative distribution was evaluated on the
resulting $P_{i,j}$ as defined in equations~\eqref{eq:P1j} and \eqref{eq:Pij}.

If a priori information exists about the null distribution, such as the one obtained
from a Phase I analysis as done in \cite{yang2015new}, historical data, or
a target median, an adaptation of the sequential normal scores to incorporate quantile information
can be used to increase the efficiency of a test
procedure. This adaptation is illustrated in the following two subsections.

%%%%%%%%%%%%%%%%%%%%%%%%%%%%%%%%%%%%%%%%%%%%%%%%%%%%%%%%%%%%%%%%%%%%%%%
%FIGURE 2 STARTS HERE
%%%%%%%%%%%%%%%%%%%%%%%%%%%%%%%%%%%%%%%%%%%%%%%%%%%%%%%%%%%%%%%%%%%%%%%
\begin{figure}
\centering
\begin{subfigure}{.5\textwidth}
  \centering
  \includegraphics[width=1\linewidth]{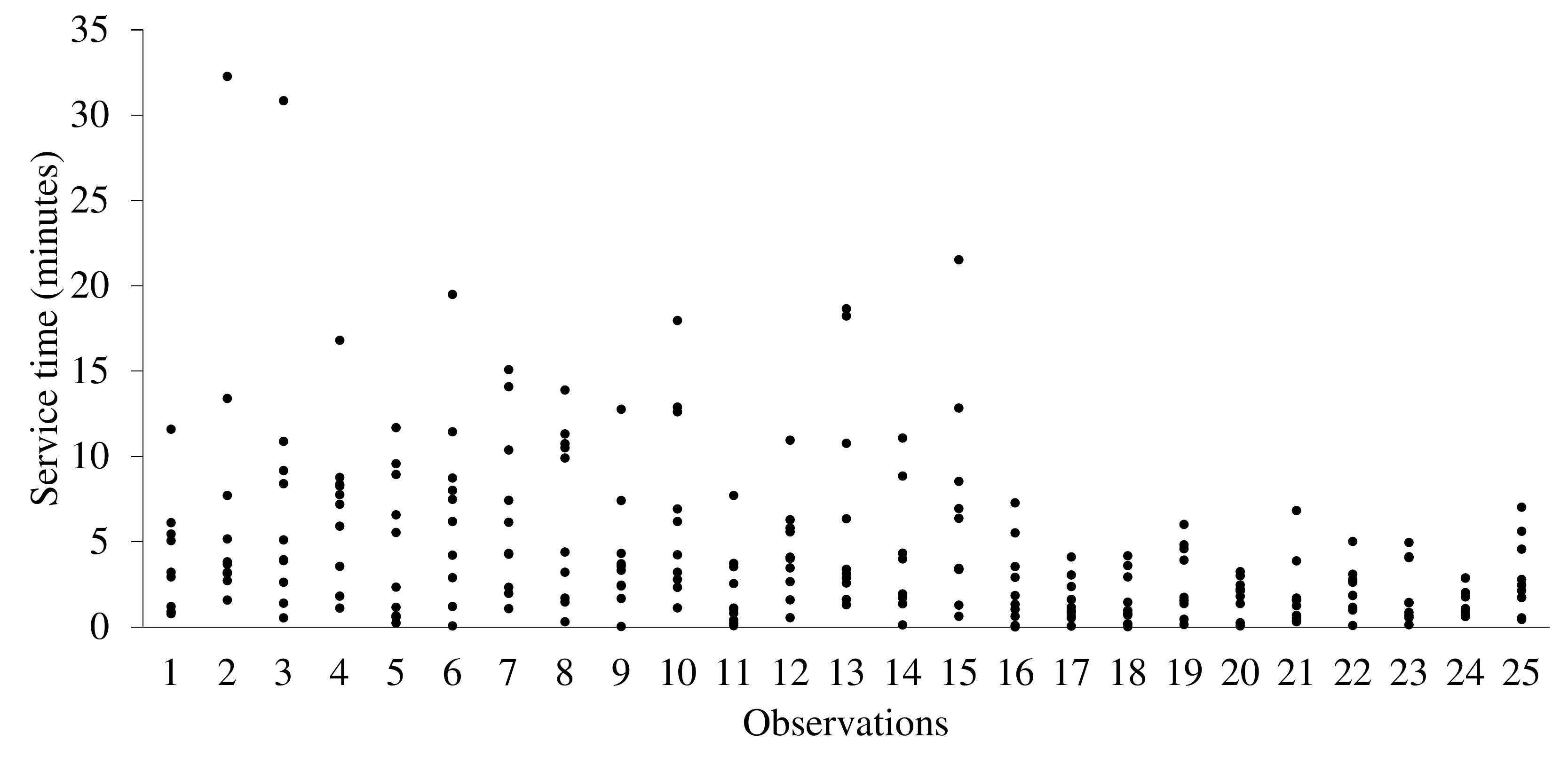}
  \caption{Original measurements from the Bank service times example.}
  \label{fig:timesub1}
\end{subfigure}%
%Delete space betwen subfigures if you want side by side
\begin{subfigure}{.5\textwidth}
  \centering
  \includegraphics[width=1\linewidth]{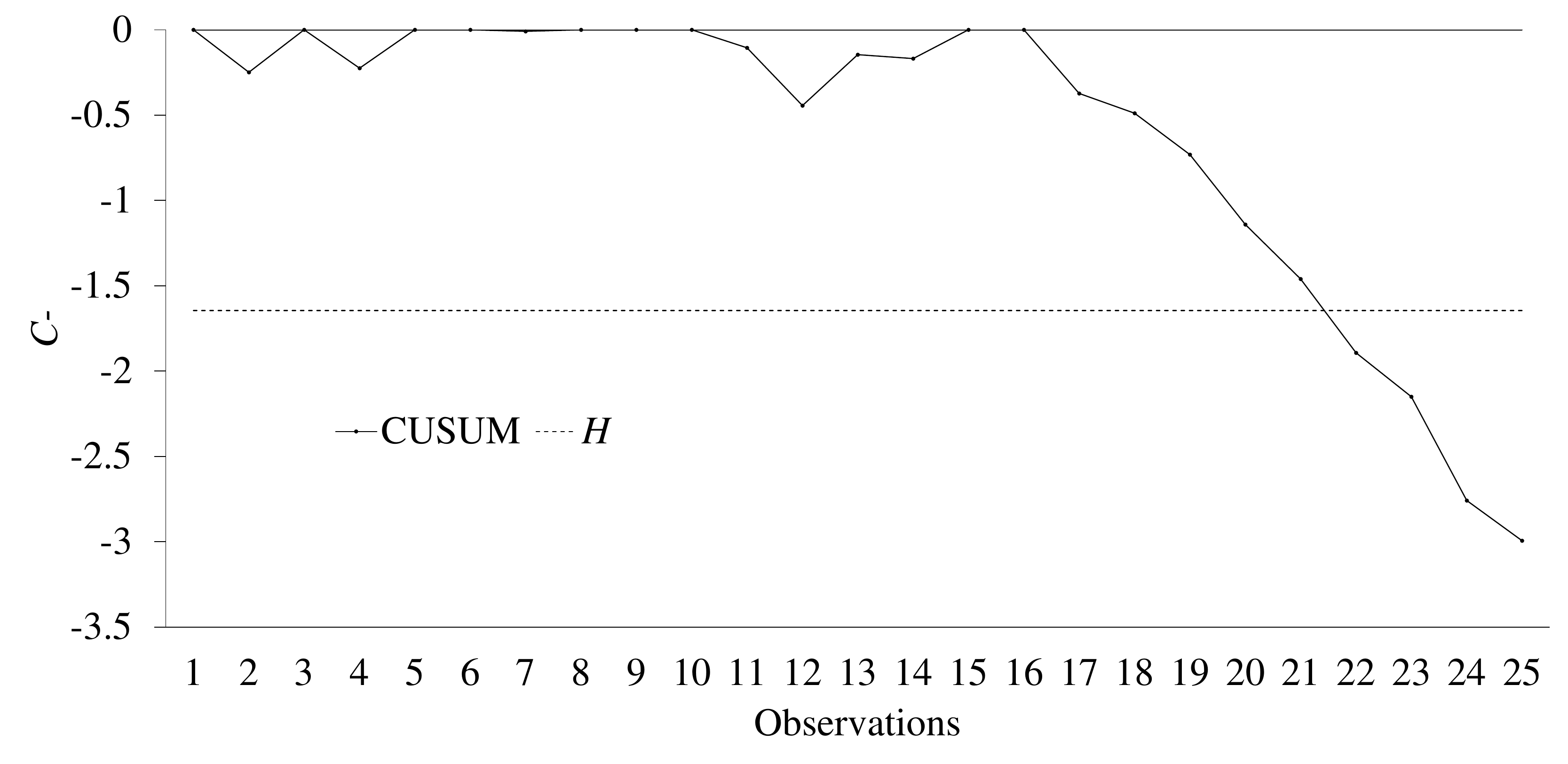}
  \caption{Optimal CUSUM for variance on sequential normal scores.}
  \label{fig:timesub2}
\end{subfigure}
\caption{Optimal CUSUM chart for subgroup variance with
  k = 0.793 and H = -1.645 was used on sequential normal scores obtained from service
times obtained from the bank example data set.}
\label{fig:time}
\end{figure}
%%%%%%%%%%%%%%%%%%%%%%%%%%%%%%%%%%%%%%%%%%%%%%%%%%%%%%%%%%%%%%%%%%%%%%%
%FIGURE 2 ENDS HERE
%%%%%%%%%%%%%%%%%%%%%%%%%%%%%%%%%%%%%%%%%%%%%%%%%%%%%%%%%%%%%%%%%%%%%%%

\subsection{Individual observations with known quantile $\theta$}
\label{subsec:model3}
Let $X_1, X_2,\ldots, $ be a sequence of independent identically distributed random
variables with a continuous distribution function $F(x)$. In contrast to
Section~\ref{subsec:model1} we will assume that a quantile $\theta$ and its
corresponding probability $F(\theta)$ are known, or assumed to be known, such
as $\theta$ = median. Define the conditional sequential rank $R_{i|\theta}$ of
$X_i$ differently depending on if $X_i<\theta$ or if $X_i>\theta$. In particular,
if $X_i<\theta$ then the conditional sequential rank $R_{i|\theta}$ of $X_i$ is the
rank of $X_i$ relative only to the previous random variables that were less than
$\theta$, including $X_i$. On the other hand, if $X_i>\theta$ then the conditional
sequential rank $R_{i|\theta}$ of $X_i$ is the rank of $X_i$ relative only to the
previous random variables that were greater than $\theta$, including $X_i$.

Let $N_i^-$  be the number of random variables, of the first $i$ random variables in
the sequence, that are less than or equal to $\theta$, and let $N_i^+$ be the number of random
variables, of the first $i$ random variables in the sequence that are greater than
$\theta$. Then $N_i^- + N_i^+ = i$. For reasons explained in Section 3 we will use
\begin{align}
\label{eq:pitheta}
P_{i|\theta}=\begin{cases}
F(\theta) \frac{R_{i|\theta}- 0.5}{N_i^-} \quad &, if \quad X_i \leq \theta\\
F(\theta) + [1-F(\theta)] \frac{R_{i|\theta} - 0.5}{N_i^+} \quad &, if  \quad X_i > \theta
\end{cases}.
\end{align}
to estimate $F(X_i|\theta)$. \textit{Conditional sequential normal scores} are obtained from $P_{i|\theta}$
using $Z_{i|\theta} = \Phi^{-1} \left( P_{i|\theta} \right)$ where $\Phi^{-1}$ stands
for the inverse cumulative standard normal distribution function. As will be proven in
Section \ref{sec:theory}, the sequence $\left\{ Z_{i|\theta}:i=1,2,... \right\}$
consists of mutually independent asymptotically (as $i$ gets large) standard normal
random variables.

\subsubsection{Concrete strength example}
\label{subsubsec:concrete}
From \cite{AICHOUNIcontrol} we obtain a data set of compressive strength for
ready mixed concrete. The compression strength of 22 samples of concrete with a
target specification of 350 Kgf/cm$^2$ were measured over a period of 30 days.
As shown by \citeauthor{AICHOUNIcontrol}, observations do not fit
a normal distribution. They addressed the situation of non-normality by
using a Johnson transformation. However, the use of a transformation in small data
sets comes with the risk of over-fitting and the selection of a mistaken
transformation function. By using conditional sequential normal scores, this risk is avoided. Nevertheless,
\citeauthor{AICHOUNIcontrol} did not evaluate
whether sampled measurements achieved the target value, they tested only for isolated
changes (where the parametric scenario might be the best approach). To continue
with the analysis, we evaluate if the compressive strength is
actually significantly larger than the median target. By defining the nominal value of 350 as the target process median,
the analysis can be carried out by using conditional sequential normal scores from
equation \eqref{eq:pitheta}.
Here, it is assumed that under the null distribution $F(350)=0.5$.
Figure \ref{fig:concrete} shows the results after evaluating the conditional sequential normal
scores with a CUSUM
chart that assumes normal standard observations. From \cite{qiu2013introduction},
we obtained that an allowance of $k=0.25$ and
a control limit $H=5.597$ used as parameters of a one-sided tabular CUSUM
chart achieve an in-control performance of 200 in terms of average run length
($ARL_0$). Even though we are monitoring a null median of 350, and this information is incorporated
into the sequential normal scores, the monitored scores still have a mean value of zero
and their behavior can be approximated with a standard normal distribution.
As can be seen, an alarm is signaled at sample 21, which indicates that
the mixture is providing more strength than nominal specification.
Because sequential normal scores provide a conservative approximation of the standard normal
distribution, with a slightly smaller variance (see Subsection~\ref{subsec:SNSvar} for details),
the true $ARL_0$ is at most the one specified by the CUSUM setup.

%%%%%%%%%%%%%%%%%%%%%%%%%%%%%%%%%%%%%%%%%%%%%%%%%%%%%%%%%%%%%%%%%%%%%%%
%FIGURE 3 STARTS HERE
%%%%%%%%%%%%%%%%%%%%%%%%%%%%%%%%%%%%%%%%%%%%%%%%%%%%%%%%%%%%%%%%%%%%%%%
\begin{figure}
\centering
\begin{subfigure}{.5\textwidth}
  \centering
  \includegraphics[width=1\linewidth]{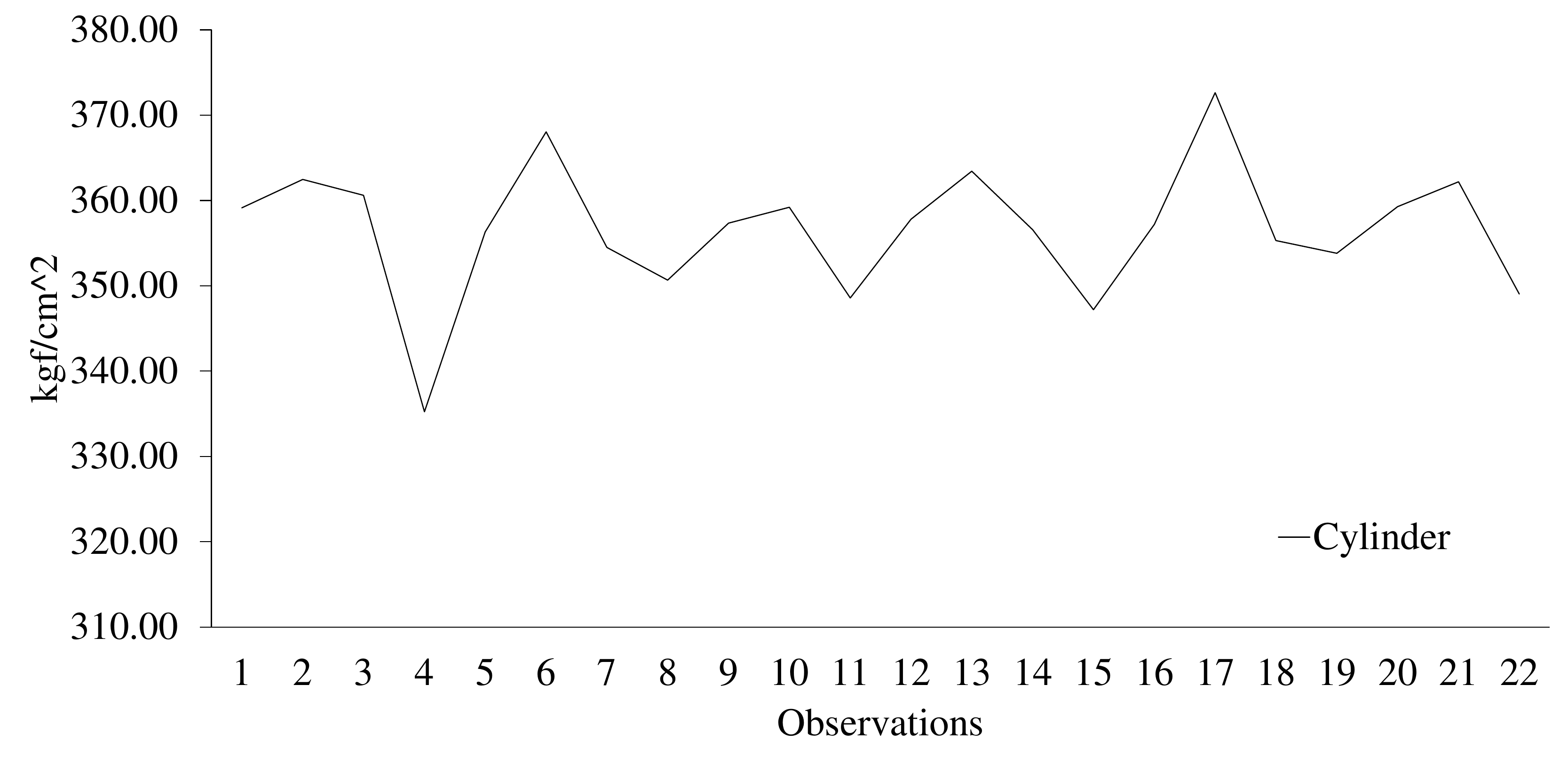}
  \caption{Average compressive strength measurements.}
  \label{fig:concretesub1}
\end{subfigure}%
%Delete space betwen subfigures if you want side by side
\begin{subfigure}{.5\textwidth}
  \centering
  \includegraphics[width=1\linewidth]{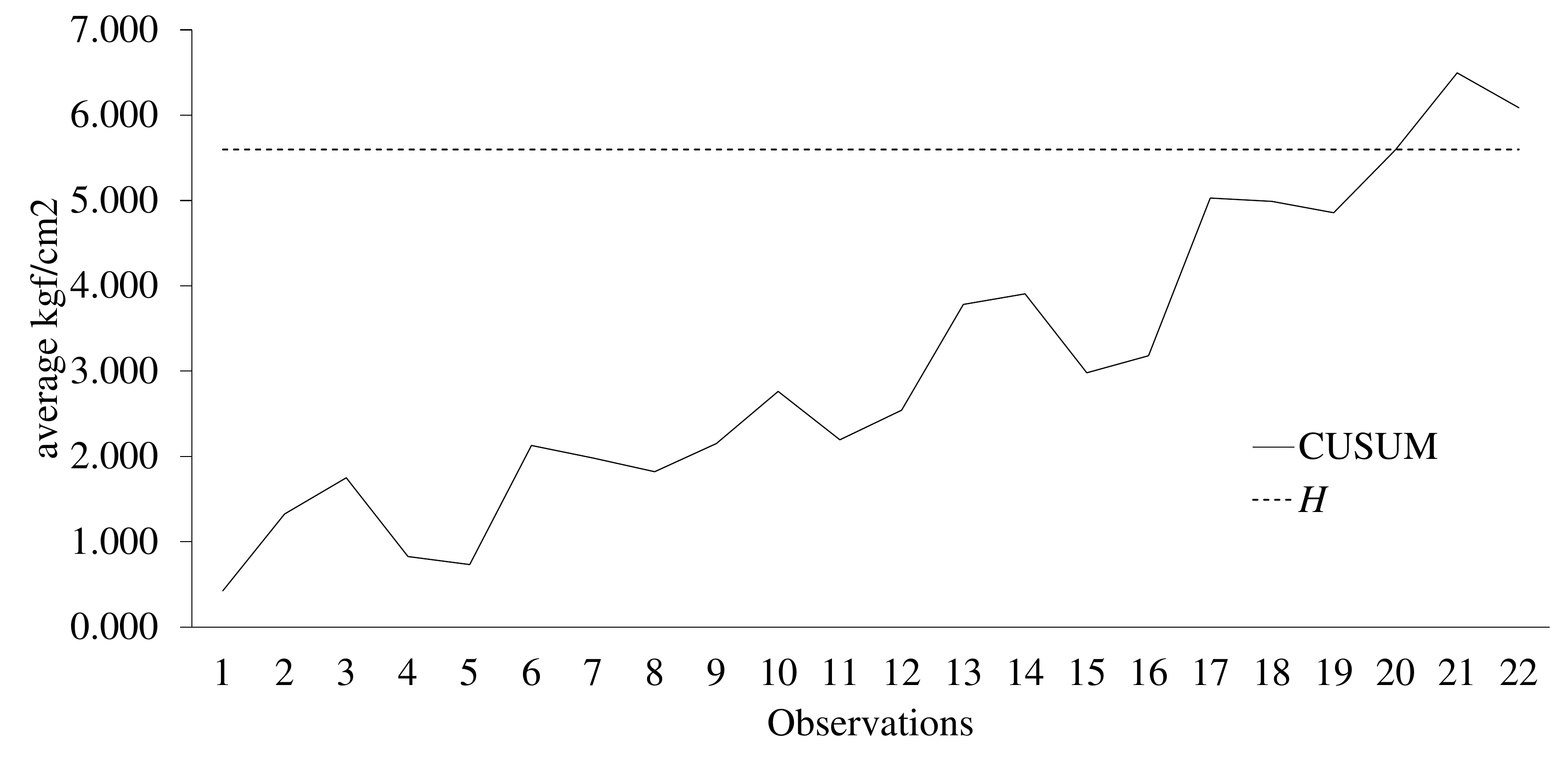}
  \caption{CUSUM chart on sequential normal scores.}
  \label{fig:concretesub2}
\end{subfigure}
\caption{CUSUM obtained from sequential normal scores of compressive strength for Ready
Mixed Concrete (Kgf/cm$^2$). The average was used as an individual observation.
An allowance of $k = 0.25$ and a decision interval of $H = 5.597$ were used.}
\label{fig:concrete}
\end{figure}
%%%%%%%%%%%%%%%%%%%%%%%%%%%%%%%%%%%%%%%%%%%%%%%%%%%%%%%%%%%%%%%%%%%%%%%
%FIGURE 3 ENDS HERE
%%%%%%%%%%%%%%%%%%%%%%%%%%%%%%%%%%%%%%%%%%%%%%%%%%%%%%%%%%%%%%%%%%%%%%%

\subsection{Batched observations with known quantile $\theta$}
\label{subsection:model4}
Let $\{ X_{i,j}:i=1,\hdots,;j=1,\hdots,m \}$ be a sequence of independent
identically distributed random variables with a continuous distribution function
$F(x)$. The second subscript refers to the fact that these random variables are grouped
into batches (samples) of size m. In contrast to Section \ref{subsec:model2} we
will assume that a quantile $\theta$  and its corresponding probability $F(\theta)$
are known, or assumed to be known, such as $\theta$ = median. For the first batch
($i = 1$) the random variables that are less than $\theta$ are ranked relative to
only the other random variables in the first batch that are less than $\theta$, and the ranks
of the random variables that are greater than $\theta$ are ranked relative to only the
other random variables in the first batch that are greater than $\theta$. These conditional ranks
are denoted by $R_{1,j|\theta}$  for $j=1,\ldots, m$. For all subsequent batches define
the conditional sequential rank $R_{i,j|\theta}$ of $X_{i,j}$ differently depending on
if $X_{i,j} \leq\theta$ or if $X_{i,j}>\theta$ . In particular, if $X_{i,j} \leq\theta$,
then the conditional sequential rank $R_{i,j|\theta}$ of $X_{i,j}$ is the rank of
$X_{i,j}$ relative to only the random variables that were less than $\theta$ in the
previous batches, including $X_{i,j}$, but no other random variables in the same batch i.
On the other hand, if $X_{i,j}>\theta$ then the conditional sequential rank
$R_{i,j|\theta}$ of $X_{i,j}$ is the rank of $X_{i,j}$ relative to only the previous
random variables that were greater than $\theta$ in the previous batches, including
$X_{i,j}$ but no other random variables from the same batch i.

Let $N_1^-$ be the number of random variables of the first batch in the sequence,
that are less or equal than $\theta$ and $N_1^+$ be the number of random variables
of the first batch that are greater than $\theta$. For $i>1$, let $N_i^-$  be the 
number of random variables, of the first $i-1$ batches of
random
variables in the sequence, that are less than $\theta$, and let $N_i^+$ be the number
of random variables of the first $i-1$ batches of random variables in the
sequence that
are greater than $\theta$. Then, $N_1^- + N_1^+ = m$ and $N_i^- + N_i^+ = (i-1)m$ for $i>1$. 
For reasons explained in
Section~\ref{sec:theory} we will use $P_{1,j|\theta}$ for $i=1$ and $P_{i,j|\theta}$
for $i>1$, as
\begin{align}
\label{eq:P1jtheta}
P_{1,j|\theta}= \begin{cases}
F(\theta) \frac{R_{1,j|\theta} - 0.5}{N_1^-} \quad &, \quad X_{1,j} \leq \theta\\
F(\theta)+(1-F(\theta))\frac{R_{1,j|\theta} - 0.5}{N_1^+} \quad &, \quad X_{1,j}>\theta
\end{cases},
\end{align}
and for $i \geq 2$
\begin{align}
\label{eq:Pijtheta}
P_{i,j|\theta}= \begin{cases}
F(\theta) \frac{R_{i,j|\theta} - 0.5}{N_i^- +1} \quad &, \quad X_{i,j} \leq \theta\\
F(\theta)+(1-F(\theta))\frac{R_{i,j|\theta} - 0.5}{N_i^+ +1} \quad &, \quad X_{i,j}>\theta
\end{cases}.
\end{align}

\textit{Conditional sequential normal scores} are obtained from $P_{i,j|\theta}$
using $Z_{i,j|\theta}=\Phi^{-1} \left(P_{i,j|\theta}\right)$ where $\Phi^{-1}$ stands
for the inverse cumulative standard normal distribution function. As proven in
Section~\ref{sec:theory}, the sequence
$\left\{ Z_{i,j|\theta}:i\geq 2;j=1,\ldots,m \right\}$ consists of mutually
independent asymptotically (as $i$ gets large) standard normal random variables.

\subsubsection{GPA example}
From \cite{bakir2010monitoring} we obtained data that consists of GPA results from
management major students of the Department of Business Administration at
Alabama State University. Measurements were taken from the period of Spring 2005
through Spring 2009. The research showed that GPAs maintained a desired target
median level of 2.600 but they were significantly below the higher target
median of 2.800, which represented 70\% of the maximum score of 4 points. The data
presented problems of satisfying the assumption of normality, and a nonparametric control
chart based on signed ranks was proposed by the authors. The original data is plotted
in Figure \ref{fig:GPAsub1}. Using equation \eqref{eq:P1jtheta} and \eqref{eq:Pijtheta}, observations
are transformed into conditional sequential normal scores that, in turn, are evaluated using an EWMA statistic
\begin{align}
\label{eq:ewma}
U_i = \lambda \bar{Z_i}+(1-\lambda)U_{i-1}
\end{align}
where $U_0 = 0$, and $\bar{Z_i}$ is the average of $Z_{i,j|\theta}$; $j=1,\ldots,m$; as seen in \cite{qiu2013introduction}. Significance is
achieved when the plotted statistic surpasses the limits
\begin{align}
\label{eq:ewlimits}
\pm \rho \sqrt{\frac{\lambda}{2-\lambda}[1-(1-\lambda)^{2i}]}\frac{ \sigma}{\sqrt{m}}.
\end{align}
Here, $\sigma = 1$, the variance of the standard normal distribution approximated by
the sequential normal scores. Following the \verb|spc| package in \verb|R|, the
\verb|xewma.arl| function is used in a calibration process to obtain 
the value of parameter $\rho = 2.714$ with a convenient $\lambda = 0.1$ that approximate
a zero-start
$ARL_0 \approx 370.0$ with variable control limits. Results, as seen in
Figure~\ref{fig:GPAsub2}, show a
significant difference between observed data and the target value of 2.8.
Even though the original analysis evaluated batches individually, results
are consistent with the overall analysis, as a signal is triggered first at
the second sample and continues after the fourth.

%%%%%%%%%%%%%%%%%%%%%%%%%%%%%%%%%%%%%%%%%%%%%%%%%%%%%%%%%%%%%%%%%%%%%%%
%FIGURE 4 STARTS HERE
%%%%%%%%%%%%%%%%%%%%%%%%%%%%%%%%%%%%%%%%%%%%%%%%%%%%%%%%%%%%%%%%%%%%%%%
\begin{figure}
\centering
\begin{subfigure}{.5\textwidth}
  \centering
  \includegraphics[width=1\linewidth]{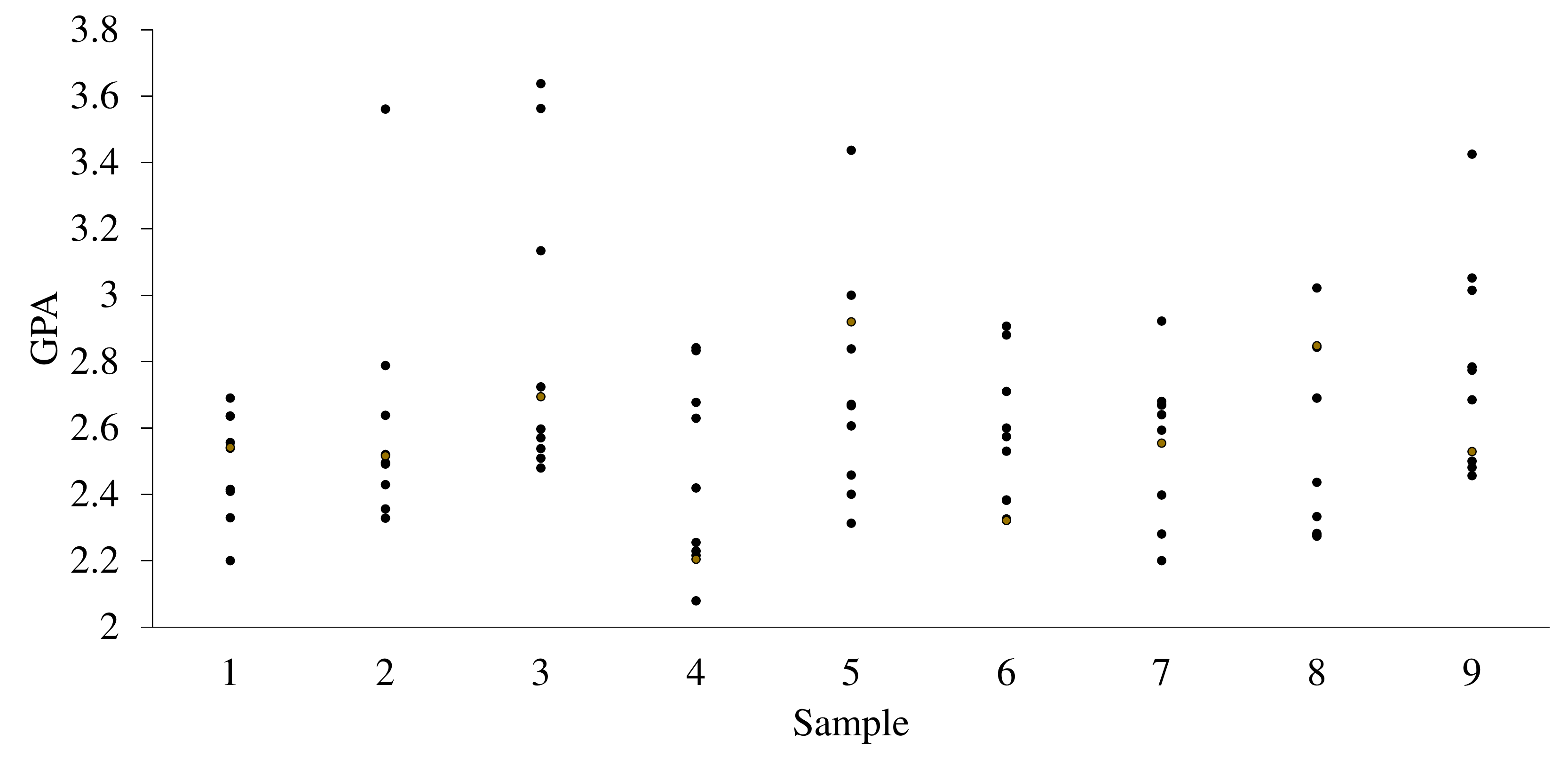}
  \caption{Original GPA measurements.\\~}
  \label{fig:GPAsub1}
\end{subfigure}%
%Delete space betwen subfigures if you want side by side
\begin{subfigure}{.5\textwidth}
  \centering
  \includegraphics[width=1\linewidth]{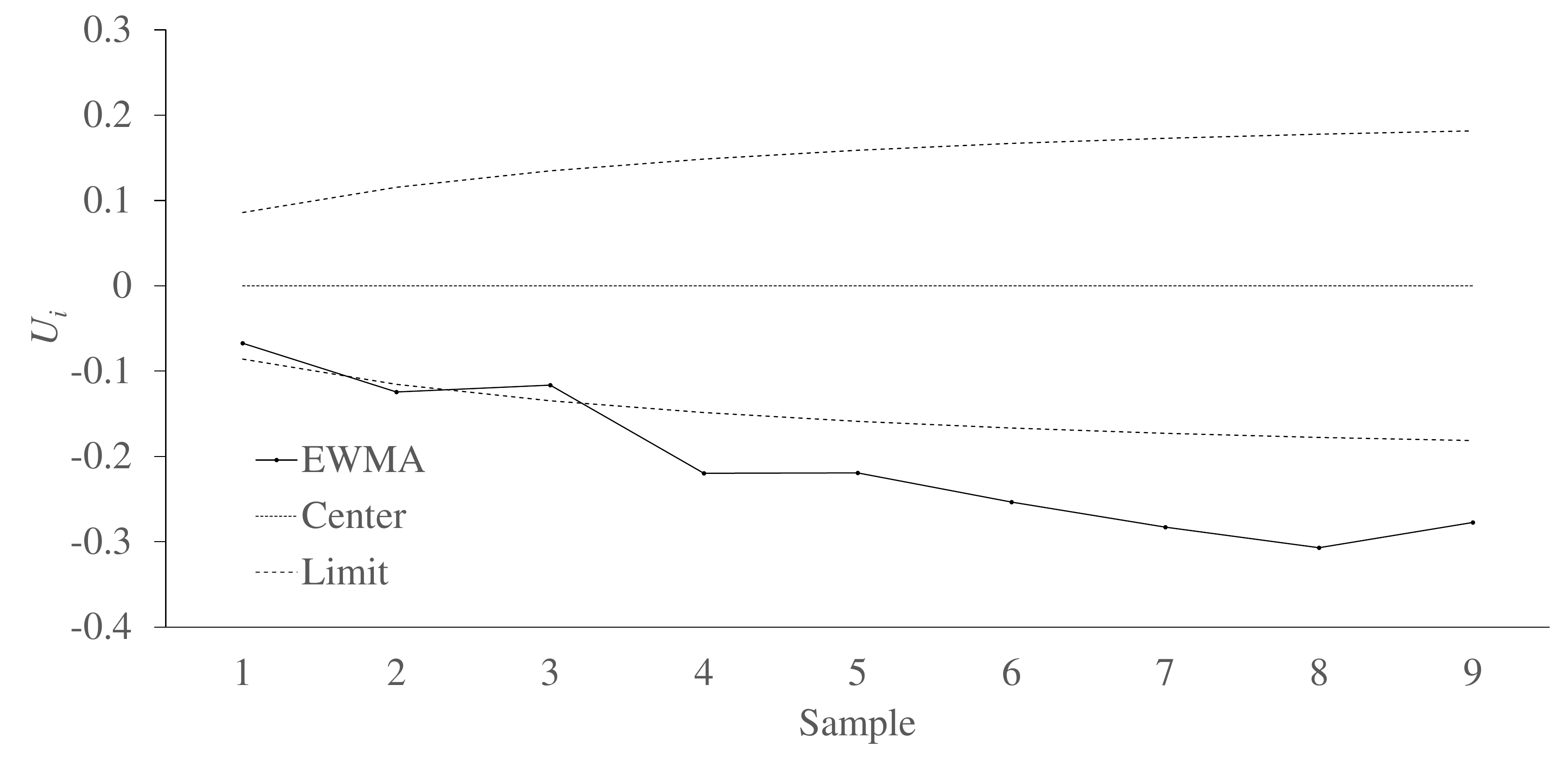}
  \caption{EWMA for subgroup observations on sequential normal scores.}
  \label{fig:GPAsub2}
\end{subfigure}
\caption{EWMA control chart applied over sequential normal scores from the GPA data set example with
a null median of 2.8.}
\label{fig:GPA}
\end{figure}
%%%%%%%%%%%%%%%%%%%%%%%%%%%%%%%%%%%%%%%%%%%%%%%%%%%%%%%%%%%%%%%%%%%%%%%
%FIGURE 4 ENDS HERE
%%%%%%%%%%%%%%%%%%%%%%%%%%%%%%%%%%%%%%%%%%%%%%%%%%%%%%%%%%%%%%%%%%%%%%%

\subsection{Application remarks}
Sequential normal scores extend the use of parametric procedures to deal with observations from an
unknown distribution. However, practitioners should be aware that such a transformation
is only appropriate to deal with sustained changes. If isolated changes are a concern,
as might be the case for a test for outliers, a parametric approach might be the best option.

Big data users and researchers might find the sequential normal scores transformation appealing
due to its asymptotic
behavior and the reduced computational effort required for its implementation.
On one hand, as seen in Section \ref{sec:theory}, the transformation becomes
close to an exact
quantile transformation into normality. When a large data set is available, model
based approaches usually fail to fit the precise observed behavior. Nonparametric approaches,
such as sequential normal scores, are more likely to provide null distributions with a better fit to the
true unknown probability law behind the statistic used to evaluate the data.
On the other hand, by following a sequential ranking scheme that avoids re-ranking
previous observation, a large amount of computational time is saved. For instance,
Figure~\ref{fig:computertime}, illustrates the time, in seconds,
it takes for three nonparametric statistics used in statistical process
control literature, including sequential normal scores, to be calculated once, after a number of
observations is available from a data stream. We considered the statistic of
\cite{hawkins2010nonparametric}, which is a change-point statistic based on Mann-Whitney
statistic;  and the statistic of \cite{chowdhury2015distribution},
where a reference sample is evaluated against a monitored sample using
the Lepage statistic. All measurements were carried using
a Hewlett-Packard PC, model 6200 PRO SFF with an Intel Core I3 2120 processor, 500GB
of hard drive, 3GB for memory and Windows 7 Pro.
It can be seen that the statistic of \cite{hawkins2010nonparametric} could not be
computed for sample size bigger than $10^4$, whereas the  statistic
of \cite{chowdhury2015distribution} and sequential normal score statistic presented
good performance in terms of computational time for data streams
until $10^7$. In each scenario the computational time of sequential normal scores is at least  
10 times faster than \citeauthor{chowdhury2015distribution}'s [\citeyear{chowdhury2015distribution}] statistic.
%%%%%%%%%%%%%%%%%%%%%%%%%%%%%%%%%%%%%%%%%%%%%%%%%%%%%%%%%%%%%%%%%%%%%%%
%FIGURE 4 STARTS HERE
%%%%%%%%%%%%%%%%%%%%%%%%%%%%%%%%%%%%%%%%%%%%%%%%%%%%%%%%%%%%%%%%%%%%%%%
\begin{figure}
\centering
\includegraphics[width=1\linewidth]{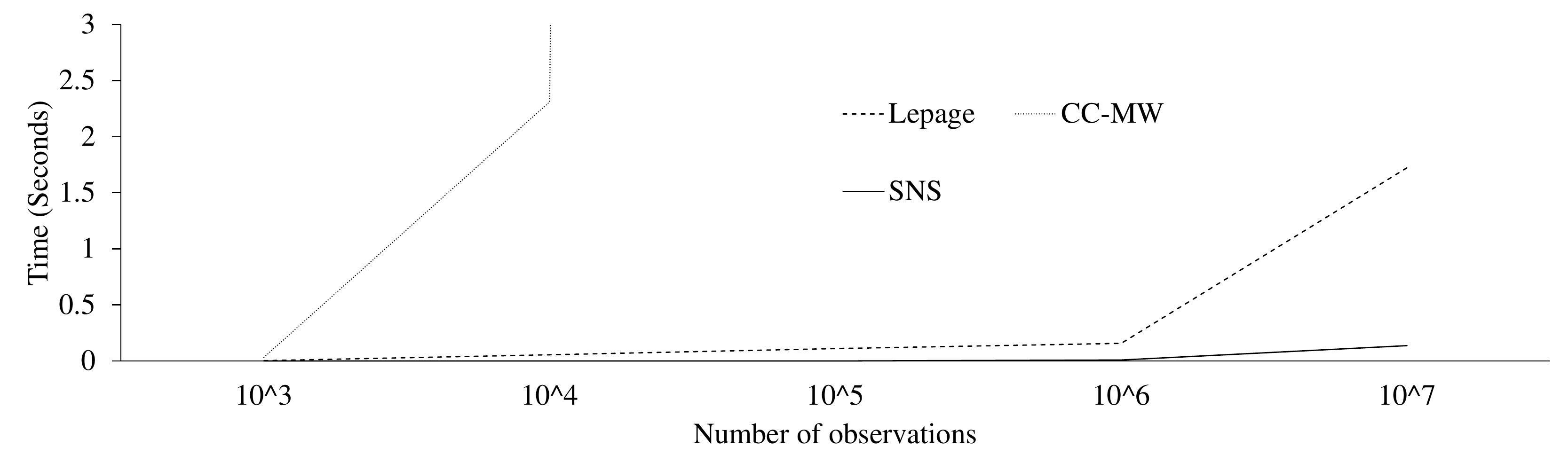}
\caption{Computer times to compute a charting statistic over different number of observations
from \cite{hawkins2010nonparametric} (CC-MW), \cite{chowdhury2015distribution} (Lepage), and
the sequential normal scores (SNS).}
\label{fig:computertime}
\end{figure}
%%%%%%%%%%%%%%%%%%%%%%%%%%%%%%%%%%%%%%%%%%%%%%%%%%%%%%%%%%%%%%%%%%%%%%%
%FIGURE 4 ENDS HERE
%%%%%%%%%%%%%%%%%%%%%%%%%%%%%%%%%%%%%%%%%%%%%%%%%%%%%%%%%%%%%%%%%%%%%%%

\section{Theoretical framework}
\label{sec:theory}

\subsection{The mean of sequential normal scores}
Consider a sequence $X_1,X_2,...,$ of independent random variables identically
distributed according to a continuous distribution function $F(x)$.
Note that $F(X_i)$ is a uniform $(0, 1)$ random variable for all $i\geq 1$.
Sequential ranks $R_i=R(X_i)$ are defined by \cite{parent1965sequential} as $R_1=1$ and, for $i\geq 2$ as

\begin{align}
\label{eq:Ri}
R_i=\sum_{j=1}^{i-1} I(X_j\leq X_i) +1.
\end{align}

Note that the indicator variables $I(X_j \leq X_i)$ are Bernoulli random variables for $1\leq j\leq i-1$, with mean $1/2$, variance $1/4$, and covariance $1/12$.

Consider $P_i=P(X_i)$ as an estimator of $F(X_i)$, where
\begin{align}
P_i=\frac{(R_i -1+a)}{i-1+b}\quad &, \quad b>0;\quad 0<a<b.
\end{align}
Thus, $0<P_i<1$ and the sequential normal score $Z_i=\Phi^{-1}(P_i)$ is a well-defined normal score, where $\Phi^{-1}$ is the inverse of the standard normal distribution function

We would like to choose $a$ and $b$ so that the mean and variance of $Z_i$ are 0 and 1 respectively. The mean of $Z_i$ equals 0 if and only if the mean (and median) of $P_i$ is 0.5,
due to the quantile preserving property of monotonic transformations. The mean of $P_i$ is,
from equation \eqref{eq:Ri}
\begin{align}
E(P_i)=\frac{\frac{i-1}{2} +a}{i-1+b}
\end{align}
which  equals 0.5 if and only if $b=2a$. The variance of $Z_i$ is a function of $b$, which is discussed in next subsection.

\subsection{The variance of sequential normal scores}
\label{subsec:SNSvar}
The variance of $P_i$ can be easily found to be

\begin{align}
\label{eq:VarSa}
Var(P_i)=\frac{\frac{i-1}{4}+\frac{(i-1)(i-2)}{12}}{(i-1+b)^2}=\frac{i^2-1}{12(i-1+b)^2}
\end{align}
which is a function of b, and equals 1/12 (the variance of $F(X)$) if and only if $b=1-i+\sqrt{i^2-1}$. As $i$ gets large, $b$ approaches 1, which suggests using $b=1$ for finite sample sizes.

Unfortunately the variance is not preserved in the normal scores transformation, so we are forced to use numerical approximation methods to find values of b that result in $Var(Z_i)=1$. These results are summarized in Table \ref{tab:std for b}.

Table \ref{tab:std for b} shows that the actual value of $b$ to obtain $Var(Z_i)=1$ increases
from $b=0.465$ for $i=2$ to $b=0.915$ for $i=5000$. The effect of using $b=1$ as an
approximation is slight (less than $2\%$ error for the standard deviation of $Z_i$)
for $i>31$, and the approximation
\begin{align}
\label{eq:baprox}
b = 0.824-\frac{0.792}{i}
\end{align}
results in less than a $1\%$ error for $i > 2$ ($3.7\%$ error for $i = 2$ ).
If $b = 1$ is used for all values the result is a conservative estimate for the standard
deviation of the sequential normal scores $Z_i$, so that is what we used in our examples.
A slight increase in power results from using the approximation for $b$ when $i<31$.
Nevertheless, the increased power was not big enough to change the decisions obtained from the
particular numerical examples shown in Section \ref{sec:SNS}.

This approximation problem has been previously addressed. For example, 
\cite{van1952order} used $a=1$, $b=2$; \cite{blom1954transformations} used 
$a=5/8$, $b=1.25$; \cite{tukey1962future} used $a= 2/3$, $b=4/3$; and 
\cite{bliss1956rankit} used $a=1/2$, $b=1$ which they called a “rankit”. 
An empirical study by \cite{solomon2009impact} compared these four approximations 
and concluded that ``Rankit emerged as the most accurate method among all 
sample sizes and distributions, thus it should be the default selection for score 
normalization in the social and behavioral sciences''(p.448). Note that all four 
approximations preserve a mean of 0 for the normal scores because $b = 2a$. However all 
four approximations result in a variance less than 1.0 which we addressed in this section.

\begin{table}[H]
\centering
\caption{Standard deviation of $Z_i$ when $b=1$, and values of $b$ that give 1.0 for the standard deviation of $Z_i$. Also given is the standard deviation of $Z_i$ when the approximation for $b$ from equation \eqref{eq:baprox} is used.}
\label{tab:std for b}
\begin{tabular}{ccccc}
\hline
i    & $\sigma$ when $b = 1$ & \begin{tabular}[c]{@{}l@{}}Value of $b$ to\\   obtain $\sigma = 1$\end{tabular} & \begin{tabular}[c]{@{}l@{}}Value of $b$ using \\   approximation\end{tabular} & \begin{tabular}[c]{@{}l@{}}Value of $\sigma$ using $b$\\   from equation~\eqref{eq:baprox}\end{tabular} \\ \hline
2    & .675         & .465                                                                   & .428                                                                        & 1.037                                                                         \\
3    & .790         & .565                                                                   & .560                                                                        & 1.004                                                                         \\
4    & .844         & .618                                                                   & .626                                                                        & .996                                                                          \\
5    & .876         & .652                                                                   & .666                                                                        & .994                                                                          \\
10   & .938         & .727                                                                   & .745                                                                        & .995                                                                          \\
20   & .969         & .778                                                                   & .784                                                                        & .999                                                                          \\
30   & .979         & .800                                                                   & .798                                                                        & 1.000                                                                         \\
31   & .980         & .802                                                                   & .798                                                                        & 1.000                                                                         \\
32   & .980         & .803                                                                   & .799                                                                        & 1.000                                                                         \\
100  & .994         & .847                                                                   & .816                                                                        & 1.001                                                                         \\
1000 & .999         & .896                                                                   & .823                                                                        & 1.001                                                                        \\
5000 & 1.000        & .915                                                                   & .824                                                                        & 1.000                                                                        \\ \hline
\end{tabular}
\end{table}

\subsection{Include a priori information about a known quantile}

Let $X_1,X_2,\ldots,$ be a sequence of independent observations,
%where$i$ stands for the last observation of the series, 
and $F(\cdot)$ corresponds to their
common cumulative null distribution. Also, assume that $F(\theta)$ and
$\theta$ are known, where the latter is a constant.
(To facilitate notation we'll be using
$F(a)$ and $P(X \leq a)$, without the
subindex in the variable $X$, interchangeably, to express the evaluation
of the cumulative distribution function at a constant a under the null
hypothesis, and we will suppress the fact that
$F(\theta)$ is known even though it is implicit
in all of the probabilities of this section).

Given a constant $a$, a conditional sequential normal scores statistic can be constructed by noting that
\begin{align}
\label{eq:p0}
P(X \leq a) = P(X \leq a| X \leq \theta)P(X \leq \theta) + P(X \leq a| X > \theta)P(X > \theta)
\end{align}
where the $P(X \leq a)$ can be estimated by using
\begin{align}
\label{eq:p1}
\hat{P}(X_i \leq a| X_i \leq \theta) = \frac{\sum_{j=1}^{i-1} I(X_j \leq a)I(X_j \leq \theta) + 0.5}{\sum_{j=1}^{i-1} I(X_j \leq \theta) + 1},
\end{align}
and
\begin{align}
\label{eq:p2}
\hat{P}(X_i \leq a| X_i > \theta) = \frac{\sum_{j=1}^{i-1} I(X_j \leq a)I(X_j > \theta) + 0.5}{\sum_{j=1}^{i-1} I(X_j > \theta) + 1}
\end{align}
in equation \eqref{eq:p0}. Equations \eqref{eq:p1} and \eqref{eq:p2} are maximum likelihood
estimators with biasing constants 0.5 and 1 defined in previous section.
Hence, using the fact that $P(X \leq \theta)$ is known, a cumulative probability can be
better estimated by
\begin{align}
\label{eq:probupdate}
\hat{P}(X \leq X_i) = \begin{cases}
P(X \leq \theta) \frac{\sum_{j=1}^{i-1} I(X_j \leq X_i)I(X_j \leq \theta) + 0.5}{\sum_{j=1}^{i-1} I(X_j \leq \theta) + 1} \quad &, \quad X_i \leq \theta\\
P(X \leq \theta) + P(X > \theta) \frac{\sum_{j=1}^{i-1} I(X_j \leq X_i)I(X_j > \theta) + 0.5}{\sum_{j=1}^{i-1} I(X_j > \theta) + 1} \quad &, \quad X_i > \theta
\end{cases}
\end{align}
Hence, conditional sequential normal scores are then defined by evaluating $\Phi^{-1}(\hat{P}(X \leq X_i))$.

\subsection{Sequence of Independent Statistics}
The following Proposition states the independence of the sequence
$\left\{P_i:1,\hdots,n \right\}$ and the asymptotic distribution of
the sequence $\left\{Z_i:1,\hdots,n \right\}$, of model in Section~\ref{subsec:model1}.

\begin{mypropo*}
\label{propo:Pi}
Let $X_1,X_2,...,X_n$ be a sequence of i.i.d. random variables. Define $P_i$ and $Z_i$ as in Section \ref{subsec:model1}. Then,
\begin{enumerate}
  \item \label{itm:first} Series $\left\{P_i:1,\hdots,n \right\}$ are mutually independent random variables.
  \item \label{itm:second} $\left\{Z_i:1,\hdots,n \right\}$ are independent asymptotically standard normal random variables.
\end{enumerate}

\end{mypropo*}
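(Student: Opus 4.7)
The plan is to reduce item 1 to a classical counting argument over permutations, and item 2 to a straightforward application of the continuous mapping theorem.

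For item 1, I would begin by noting that continuity of $F$ ensures no ties almost surely, so sequential ranks are well-defined a.s. The crux is to establish that the map sending the relative order of $(X_1,\ldots,X_n)$ to the sequential rank vector $(R_1,\ldots,R_n)$ is a bijection onto the product set $\{1\}\times\{1,2\}\times\cdots\times\{1,\ldots,n\}$. Both sides have cardinality $n!$, so it suffices to exhibit an inverse: given $(r_1,\ldots,r_n)$, one can insert the $i$-th observation into the $r_i$-th slot of the previously ordered list and thereby reconstruct the relative ordering uniquely. By exchangeability each of the $n!$ orderings occurs with probability $1/n!$, so for every admissible tuple
\begin{align*}
P(R_1=r_1,\ldots,R_n=r_n) \;=\; \frac{1}{n!} \;=\; \prod_{i=1}^n \frac{1}{i} \;=\; \prod_{i=1}^n P(R_i=r_i),
\end{align*}
where the marginals on the right are obtained by summing over the other coordinates. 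This factorisation gives mutual independence of the $R_i$, and since $P_i=(R_i-0.5)/i$ is a one-to-one function of $R_i$ alone, the same holds for the $P_i$.

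For item 2, the marginals above show that $R_i$ is uniform on $\{1,\ldots,i\}$, so $P_i$ is uniform on the grid $\{(k-0.5)/i : k=1,\ldots,i\}$. This discrete distribution converges weakly to $\mathrm{Uniform}(0,1)$ as $i\to\infty$, which one verifies by pointwise convergence of the cumulative distribution functions at every continuity point of the uniform limit. Because $\Phi^{-1}$ is continuous on $(0,1)$, the continuous mapping theorem yields $Z_i = \Phi^{-1}(P_i) \Rightarrow N(0,1)$. Mutual independence of the $Z_i$ is inherited at every stage from the independence of the $P_i$ established in item 1.

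The main obstacle is the bijection claim at the heart of item 1: the cardinality match $1\cdot 2\cdots n = n!$ is only a sanity check, and one must still give an honest proof, either by exhibiting the insertion-based inverse sketched above or by induction, arguing that $(R_1,\ldots,R_i)$ determines the relative order of $X_1,\ldots,X_i$. Once that is in place, the remainder is bookkeeping: the marginal uniformity of $R_i$ follows by summing the joint, and the asymptotic normality is an immediate consequence of the continuous mapping theorem applied to the (non-asymptotic) independence already in hand.
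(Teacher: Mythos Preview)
Your proposal is correct. The paper's own proof is very brief: for item~1 it simply cites Theorem~1.1 of Barndorff--Nielsen (1963) for the mutual independence of sequential ranks, and then invokes the fact that one-to-one functions of independent variables remain independent; for item~2 it appeals to the Glivenko--Cantelli theorem to argue that $P_i$ converges to a $\mathrm{Uniform}(0,1)$ variable and hence $Z_i$ to a standard normal. Your combinatorial bijection argument for item~1 is, in effect, the standard proof of the Barndorff--Nielsen result that the paper takes on citation, so you are supplying from scratch what the paper outsources. The routes genuinely diverge on item~2: the paper leans on Glivenko--Cantelli (the empirical distribution function based on $X_1,\ldots,X_i$ evaluated at $X_i$ is essentially $R_i/i$, and this converges to $F(X_i)\sim\mathrm{Uniform}(0,1)$), whereas you start from the exact marginal uniformity of $R_i$ on $\{1,\ldots,i\}$, pass to weak convergence of the discrete grid distribution, and finish with the continuous mapping theorem. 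Your approach is more elementary and self-contained, and it avoids the slightly imprecise phrase ``converges uniformly to a uniform random variable'' in the paper; the paper's approach has the advantage of being a one-line citation.
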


\begin{proof}
 Since Theorem 1.1 in \cite{barndorff1963limit} states that the sequence of random variables
 $\left\{ R_i(X_i):i=1,2,\hdots \right\}$ are mutually independent,
 it follows by Theorem 4.6.12 in \cite{casella2002statistical}
 that $\left\{ P_i:i=1,2,\hdots \right\}$ are also mutually independent random variables.
 This in turn implies that the $Z_i$ are also mutually independent. By applying the Strong
 Law of Large Numbers, the Glivenko-Cantelli Theorem implies that $P_i$ converges uniformly
 to a uniform (0,1) random variable, and thus $Z_i$ converges to a standard normal random
 variable, as $i$ goes to infinity.
\end{proof}

It can be noted that the results of the Proposition apply to the models in
Sections~\ref{subsec:model2}, \ref{subsec:model3} and \ref{subsection:model4} as well.
The variations in the proof involve redefining
sequential ranks in each section, but otherwise are trivial and are omitted here.

\begin{mycoro*}
The results of the Proposition hold even if Pi,j is given by
equations~\eqref{eq:P1j} and \eqref{eq:Pij} in Section~\ref{subsec:model2}, or if $P_{i|\theta}$ is
given by equation \eqref{eq:pitheta}
in Section~\ref{subsec:model3}, or if $P_{ij|\theta}$ is given by
equations \eqref{eq:P1jtheta} and \eqref{eq:Pijtheta} in Section~\ref{subsection:model4}.
\end{mycoro*}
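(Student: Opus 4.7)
The plan is to repeat the Proposition's proof in each of the three settings named in the Corollary. In each case the only new ingredient is mutual independence of the relevant (conditional or batched) sequential ranks; once that is in hand, Casella--Berger Theorem 4.6.12 promotes the independence to the $P$-sequence and the Glivenko--Cantelli / SLLN argument identifies the limit distribution of each $P$ as Uniform $(0,1)$, so that $Z = \Phi^{-1}(P)$ is asymptotically standard normal exactly as in the Proposition.

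For the batched unknown-quantile setting of Section \ref{subsec:model2}, the rank $R_{i,j}$ in \eqref{eq:Rnj} is purposely defined to omit within-batch comparisons. I would argue inductively in $i$ that, conditional on the $\sigma$-field $\mathcal{F}_{i-1}$ generated by batches $1,\ldots,i-1$, the ranks $R_{i,1},\ldots,R_{i,m}$ are i.i.d.\ uniform on $\{1,\ldots,m(i-1)+1\}$ with conditional distribution free of $\mathcal{F}_{i-1}$. This is the direct batched analogue of Barndorff-Nielsen's Theorem 1.1 that powers the Proposition, and unconditioning yields mutual independence of the entire family $\{R_{i,j}\}$. Since the denominators in \eqref{eq:P1j} and \eqref{eq:Pij} are deterministic, Casella--Berger applied coordinatewise gives independence of the $P_{i,j}$'s, and Glivenko--Cantelli yields the Uniform $(0,1)$ limit.

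For the known-quantile settings of Sections \ref{subsec:model3} and \ref{subsection:model4}, the idea is to decompose the sequence via the indicators $I_i = I(X_i \leq \theta)$. The $X_i$'s with $I_i = 1$, taken in order, form an i.i.d.\ sample from $F(\,\cdot\,|\,X \leq \theta)$; the $X_i$'s with $I_i = 0$ form an independent i.i.d.\ sample from $F(\,\cdot\,|\,X > \theta)$; and both are independent of the indicator stream. The conditional sequential rank $R_{i|\theta}$ in \eqref{eq:pitheta} (and its batched counterpart $R_{i,j|\theta}$) is simply the ordinary sequential rank of $X_i$ within whichever of the two subsequences it belongs to. Applying Barndorff-Nielsen's theorem separately within each subsequence --- together with the batched argument of the previous paragraph for Section \ref{subsection:model4} --- gives mutual independence of the ranks $R_{i|\theta}$ (or $R_{i,j|\theta}$) conditional on the indicator pattern.

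The main obstacle in the quantile-aware cases is that the denominators $N_i^\pm$ and $N_i^\pm + 1$ in \eqref{eq:pitheta}, \eqref{eq:P1jtheta} and \eqref{eq:Pijtheta} are random and shared across indices through the indicator pattern, so Casella--Berger does not deliver unconditional independence of the $P_{i|\theta}$'s quite as cleanly as in the Proposition. The clean resolution is to argue conditionally on the indicator stream, under which the denominators become constants, and then to rely on the SLLN $N_i^-/i \to F(\theta)$, $N_i^+/i \to 1-F(\theta)$ to push through the asymptotic standard normal statement: for large $i$ the denominators are essentially deterministic, the conditional and unconditional limits coincide, and Glivenko--Cantelli applied within each subsequence identifies the Uniform $(0,1)$ limit of $P_{i|\theta}$, completing the argument.
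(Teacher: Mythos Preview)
The paper does not actually prove the Corollary: immediately after the Proposition it simply asserts that ``the variations in the proof involve redefining sequential ranks in each section, but otherwise are trivial and are omitted here.'' Your write-up therefore goes well beyond what the authors supply, and your overall strategy --- reproduce Barndorff-Nielsen's independence for the appropriately redefined sequential ranks, then push through Casella--Berger and Glivenko--Cantelli exactly as in the Proposition --- is the intended route.

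Two remarks. First, in the batched unknown-quantile case your conditional-on-$\mathcal{F}_{i-1}$ argument is correct for $i\geq 2$, but note that within the first batch the $R_{1,j}$ are ordinary ranks of $m$ observations and hence form a random permutation; they are \emph{not} mutually independent. So the independence conclusion really only holds from batch~2 onward (the paper itself inserts the restriction $i\geq 2$ in the analogous sentence for Section~\ref{subsection:model4}).

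Second, the obstacle you flag in the known-quantile settings is genuine, not merely cosmetic. Because $N_i^{\pm}$ is a function of the indicator stream $I_k=I(X_k\leq\theta)$, the conditional law of $P_{i\mid\theta}$ given $(I_1,\ldots,I_i)$ varies with the pattern, and a direct computation with $i=1,2$ already shows that $P_{1\mid\theta}$ and $P_{2\mid\theta}$ are \emph{not} unconditionally independent. Thus item~\ref{itm:first} of the Proposition does not carry over verbatim to \eqref{eq:pitheta}--\eqref{eq:Pijtheta}; what survives is conditional independence given the indicators, together with the asymptotic standard normality of each $Z_{i\mid\theta}$ via your SLLN/Glivenko--Cantelli argument applied within each subsequence. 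That is exactly the content needed for the applications, and your resolution is the correct one --- it is simply more honest than the paper's ``trivial and omitted.''
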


\section{Limited sample approximation}
Previous results indicate that sequential normal scores are independent, and they
approach to a standard normal random variable as the series grows to infinity.
However, in practice, practitioners are limited to a finite number of observations
and might wonder how sequential normal scores will perform with that restriction.
Because this paper introduces the concept of sequential normal scores for the first time,
the authors thought it would be beneficial to examine the behavior of sequential normal
scores in more detail.
Extensive Monte Carlo simulations were conducted to determine how well a standard
normal distribution approximates the exact distribution of sequential normal scores,
as defined in Section~\ref{subsec:model1}. Although an exact distribution can be obtained
by using the fact that every ordering of the usual ranks of a random sequence of
independent and identically distributed random variables is equally likely, and there
is a one-to-one correspondence between the usual ranks and the sequential ranks,
it was more convenient to use Monte Carlo sampling in this study.

First, the exact distribution of the $n$-th sequential rank $R_n$ is well known to be
the uniform distribution over the integers 1 to $n$. But, after converting to $P_n$ and
then to $Z_n$, how does the distribution of $Z_n$  compare with the standard normal
distribution? Figure~\ref{fig:NSsim} shows the comparison for various values of $n$
from 2 to 1000. It shows that the choice of using $b = 1$ for all values of $n$ does not
appear to make an appreciable difference, as opposed to tailoring values of $b$ to make
the variance closer to 1.0.

Second, the empirical distribution of the first $n$ sequential normal scores is compared
with the standard normal distribution by averaging 1000 empirical distribution functions,
with the results shown in Figure~\ref{fig:SNSsim}. These results show that the standard
normal quantiles in the tails tend to be larger (in absolute value) than the exact
quantiles, which will result in conservative tests, but the difference appears to be
negligible for $n$ greater than 30. This agrees with our comparison of exact variances
with the variance of the standard normal distribution in Section~\ref{subsec:SNSvar}.
Also, these exact distributions reveal a relatively large probability at $x = 0$,
but this jump disappears almost completely as $n$ reaches 100 or more.

Finally, a single random sequence is evaluated at various lengths in
Figure~\ref{fig:SNSpath}, and an Anderson-Darling goodness of fit is applied.
The resulting empirical distribution functions show some divergence from the standard
normal distribution in the middle values of $x$, but good agreement in the tails of
the distributions where a good approximation is more important. The resulting p-values
for this series are significant at $n = 100$ and $n = 300$, but the significance
disappears for larger values of $n$.

\begin{figure}[h]
	\centering
	\includegraphics[width=\textwidth]{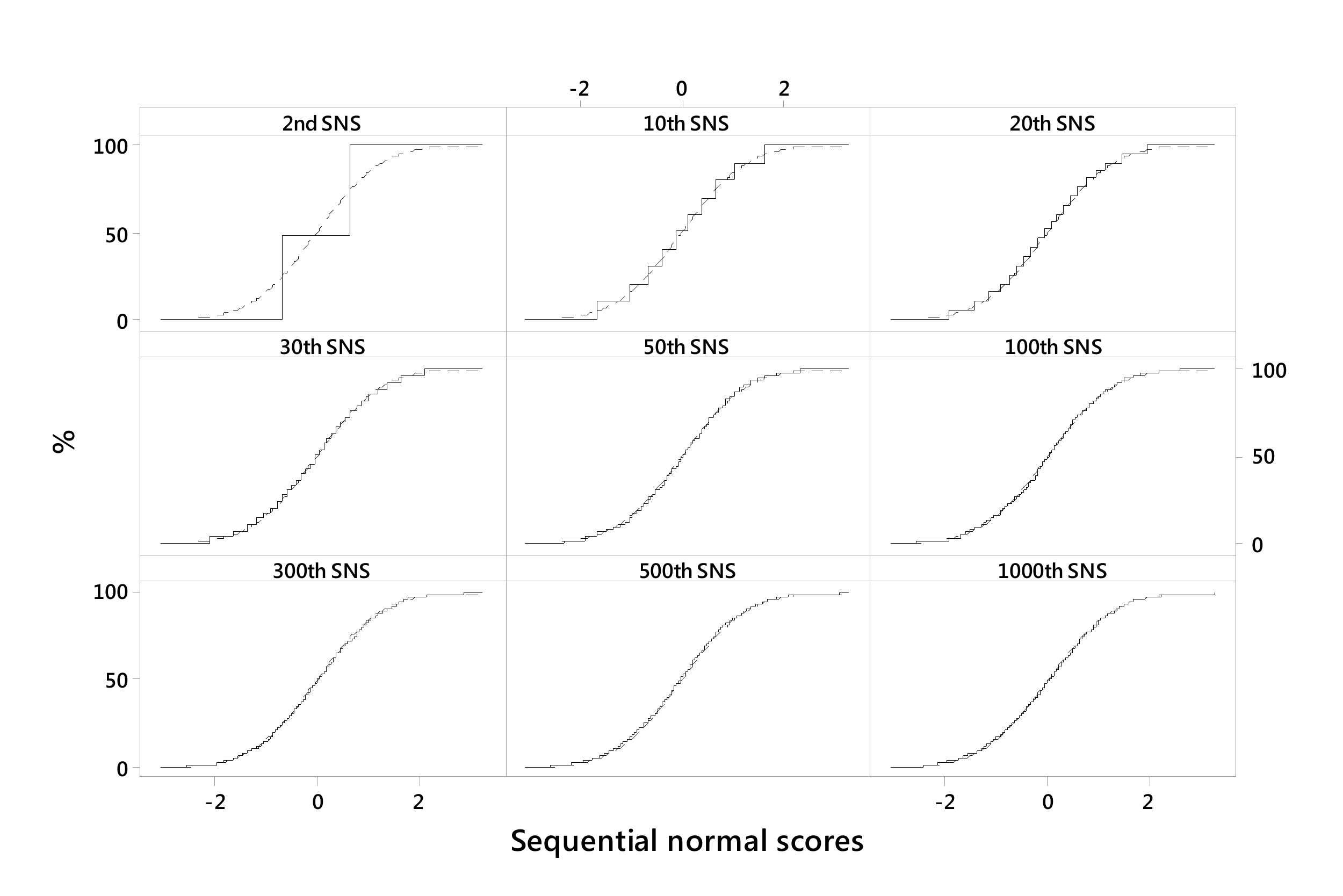}
    \caption{The exact distribution (solid line) of the $n$-th sequential normal score
    at different values of $n$, and the cumulative standard normal distribution (dotted line).}
    \label{fig:NSsim}
\end{figure}

\begin{figure}[h]
	\centering
	\includegraphics[width=\textwidth]{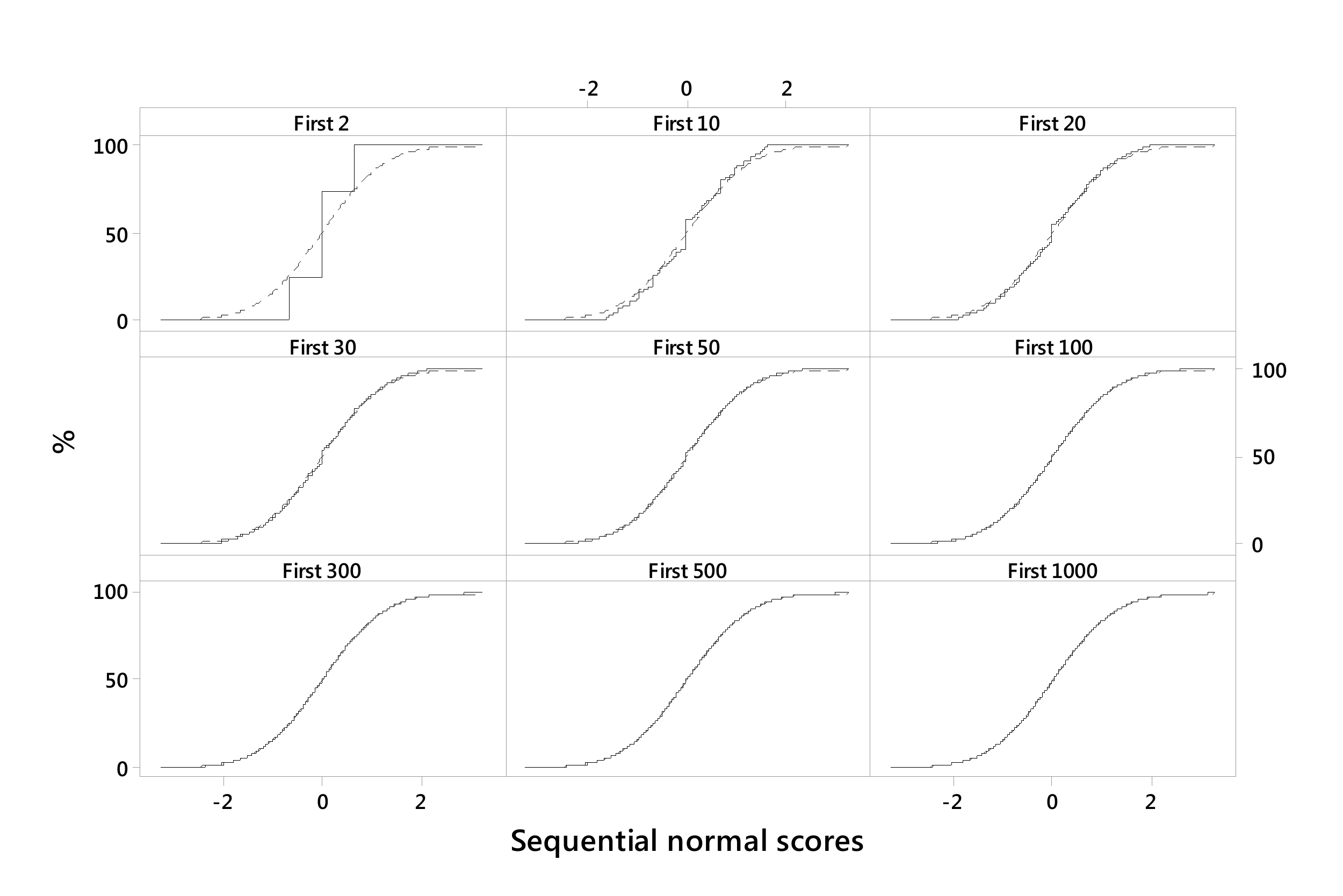}
    \caption{The expected value of the empirical distribution function (solid line)
    for different sequence lengths $n$, and the cumulative standard normal distribution
    (dotted line).}
    \label{fig:SNSsim}
\end{figure}

\begin{figure}[h]
	\centering
	\includegraphics[width=\textwidth]{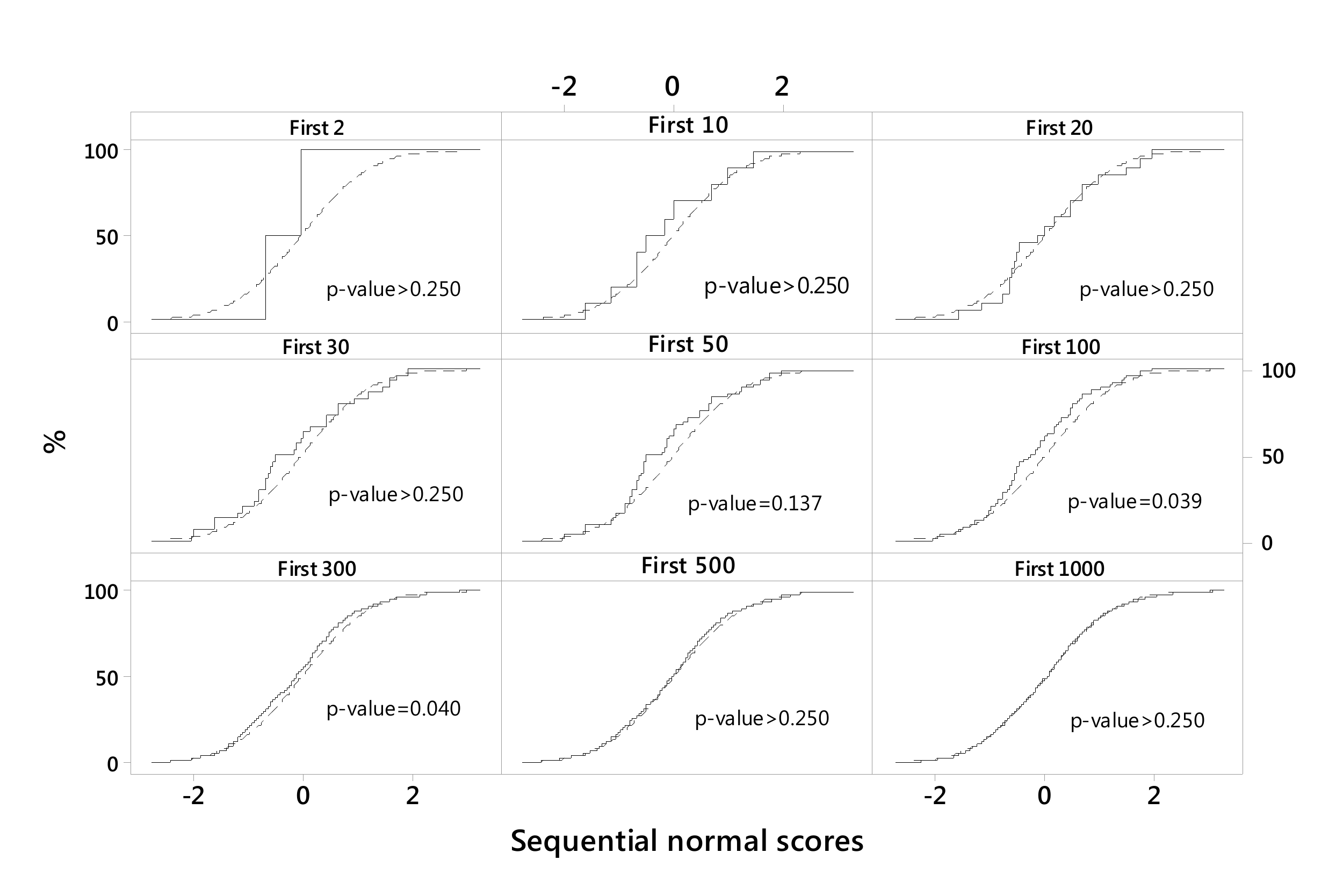}
	\caption{Empirical distributions of a sample path of sequential normal scores,
	the cumulative normal standard distribution and the p-value obtained from
	an Anderson-Darling test for $N(0,1)$ at different moments in time.} \label{fig:SNSpath}
\end{figure}

\section{Conclusions}
\label{sec:conclusions}
A new sequential statistic based on normal scores, named sequential normal scores,
was proposed as a link between parametric and nonparametric procedures that
are sequential in nature, such as control charts used in online
monitoring of data streams. The statistic extends the concept of sequential ranks into
a modified version of normal scores to obtain a sequence of
asymptotically standard normal and independent scores that can be analyzed with
existing procedures originally created to deal with normal and
independent observations. Four different versions of the statistic were presented
to address different situations with individual or batched observations, or when
a priori knowledge about a quantile exists or not. When sustained changes
are a concern, the applicability of the proposed transformation is illustrated with
the utilization of control chart on sequential normal scores from different databases
available in the
open literature. It is shown that sequential normal scores used with control
charts are capable of detecting changes in a process, requiring only i.i.d.
observations under the null hypothesis. Also, if a priori knowledge exists about
a process quantile under the null assumption, which might be the case when it is
desired that a process follows a target median, sequential normal scores are adapted to incorporate
existing information in such a way that control charts with sequential normal scores become sensitive
to detect changes at the very start of a monitoring.
Big data users might be willing to analyze their data streams with the sequential normal
scores transformation. If streams are very large, the fit to a normal distribution
provided by sequential normal scores might be, in many cases, arguably better,
than the fit one might find from parametric models--unless the true underlying
distribution is known. In addition, even though the sequential normal scores lack of complexity
makes them ``computer friendly'', if data streams are very large, it is easy to restrict
our comparisons to a moving window of reasonable size (e.g. 1000 or 5000) when determining
the sequential ranks. This sets the memory usage of a computer and the number of operations
to transform data constant.
The proposed statistic offers
a tool that makes nonparametric analysis readily available for practitioners and an
approach that can be used by researchers to deal with new problems as they become
available.

\bibliographystyle{plainnat}
\bibliography{Paper_SNS_20170509}
\end{document}